\newtheorem{theorem}{Theorem}
\newtheorem{lemma}{Lemma}
\newtheorem{prop}{Proposition}
\newtheorem{assum}{Assumption}
\def\beq{ \begin{equation} }
\def\eeq{ \end{equation} }
\def\mn{\medskip\noindent}
\def\ms{\medskip}
\def\ep{\epsilon}
\def\square{\vcenter{\vbox{\hrule height .4pt
  \hbox{\vrule width .4pt height 5pt \kern 5pt
        \vrule width .4pt} \hrule height .4pt}}}
\def\RR{\mathbb{R}}
\def\ZZ{\mathbb{Z}}
\def\var{\hbox{var}\,}
\def\cov{\hbox{cov}\,}
\def\hbr{\hfill\break}
\def\TL{\mathbb{T}_L}
\def\clearp{}
\begin{document}

\title{Evolutionary Games on the Torus\\
with Weak Selection}
\author{J.T. Cox and Rick Durrett}

\maketitle

\begin{abstract} We study evolutionary games on the torus with $N$ points in dimensions $d\ge 3$.
The matrices have the form $\bar G = {\bf 1} + w G$,  where ${\bf 1}$ is a matrix that consists of all 1's,
and $w$ is small. As in Cox Durrett and Perkins \cite{CDP} we rescale time and space and take a limit as $N\to\infty$ and $w\to 0$.
If (i) $w \gg N^{-2/d}$ then the limit is a PDE on $\RR^d$. If (ii) $N^{-2/d} \gg w \gg N^{-1}$, then the limit is an ODE.
If (iii) $w \ll N^{-1}$ then the effect of selection vanishes in the limit.
In regime (ii) if we introduce a mutation $\mu$ so that $\mu /w  \to \infty$ slowly enough then we arrive at Tarnita's formula 
that describes how the equilibrium frequencies are shifted due to selection.
\end{abstract}

\section{Introduction}

Here we will be interested in $n$-strategy
evolutionary games on the torus $\TL = (\ZZ \bmod L)^d$. 
Throughout the paper we will suppose that $n\ge 2$ and  $d\ge 3$.
The dynamics are described by a game matrix
$G_{i,j}$ that gives the payoff to a player who plays strategy $i$ against an opponent who plays strategy $j$. 
As in \cite{CDP,spaceg}, we will study games with matrices of the form $\bar G = {\bf 1} + w G$, 
and ${\bf 1}$ is a matrix that consists of all 1's, and $w=\ep^2$. We use two notations for the small parameter to make it easier to
connect with the literature. 

There are two commonly used update rules. To define them introduce 

\begin{assum} \label{pass}
Let $p$ be a probability distribution on $\ZZ^d$  with finite range, $p(0)=0$ and that satisfies the following symmetry assumptions.
\begin{itemize}
\item
If $\pi$ is a permutation of $\{1, 2, \ldots d\}$ and $(\pi z)_i = z_{\pi(i)}$ then $p(\pi z)=p(z)$.
\item 
If we let $\hat z^i_i = - z_i$ and  $\hat z^i_j = z_j$ for $j\neq i$ then $p(\hat z^i)=p(z)$.
\end{itemize}
\end{assum} 

\noindent
If $p(z) = f(\|z\|_p)$ where $\|z\|_p$ is the $L^p$ norm on $\ZZ^d$ with $1\le p \le \infty$ then the symmetry assumptions are satisfied.

\mn
{\bf Birth-Death Dynamics.}
In this version of the model, a site $x$ gives birth at a rate equal to its fitness
$$
\psi(x) = \sum_{y} p(y-x) \bar G(\xi(x),\xi(y))
$$
and the offspring, which uses the same strategy as the parent, replaces a ``randomly chosen neighbor of $x$.'' Here, and in what follows, the phrase in quotes
means $z$ is chosen with probability $p(z-x)$. 

\mn
{\bf Death-Birth Dynamics.}
In this case, each site $x$ dies at rate 1 and is replaced by the offspring of a neighbor $y$ chosen with probability proportional to 
$p(y-x)\psi(y)$. 

\medskip
Tarnita et al.~\cite{TOAFN,TWN} have studied the behavior of evolutionary games on more general graphs when $w = o(1/N)$
and $N$ is the number of vertices. To describe their results, we begin with the two strategy game written as
\beq
\begin{matrix}
& 1 & 2 \\
1 & \alpha & \beta \\
2 & \gamma & \delta
\end{matrix}
\label{2sg}
\eeq
In \cite{TOAFN} strategy 1 is said to be favored by selection (written $1>2$) if
the frequency of 1 in equilibrium is $>1/2$ when $w$ is small. Assuming that

\ms
(i) the transition probabilities are differentiable at $w=0$,

\ms
(ii) the update rule is symmetric for the two strategies, and

\ms
(iii) strategy $1$ is not disfavored in the game given with $\beta=1$ and $\alpha=\gamma=\delta=0$

\mn
they argued that 

\mn
I. {\it $1> 2$ is equivalent to $\sigma \alpha + \beta > \gamma + \sigma \delta$
where $\sigma$ is a constant that only depends on the spatial structure and update rule.}

\medskip
In \cite{spaceg} it was shown that for games on $\ZZ^d$ with $d\ge 3$ that

\begin{theorem} \label{TF2}
I holds for the Birth-Death updating with $\sigma=1$ and 
for the Death-Birth updating with $\sigma = (\kappa+1)/(\kappa-1)$ where
\beq
\kappa = 1\left/ \sum_x p(x)p(-x) \right.
\label{kappadef}
\eeq
is the effective number of neighbors.
\end{theorem}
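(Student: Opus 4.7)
The plan is to view the model as a perturbation of the voter model in the small parameter $w$ and to compute the first-order correction to the equilibrium density of type $1$. On $\ZZ^d$ with $d\ge 3$ and $p$ of finite range, the pair-difference of two independent $p$-walks is transient, so the coalescence probabilities that will appear are all finite. To pin down a unique equilibrium I would add symmetric mutation at small rate $\mu$, study the stationary measure $\pi_{w,\mu}$, and send $\mu\downarrow 0$ at the end. Splitting the generator as $\Omega=\Omega_0+w\Omega_1$ (voter plus selection) and expanding $\pi_{w,\mu}=\pi_{0,\mu}+w\pi_{1,\mu}+O(w^2)$, the criterion ``$1>2$'' becomes $E_{\pi_{1,\mu}}[\mathbf 1_{\xi(0)=1}]>0$. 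Since $\Omega_1$ depends linearly on $G$ and the $w=0$ dynamics are symmetric under swapping the two strategies, this first-order condition must reduce to an inequality
\[
a(\alpha-\delta)+b(\beta-\gamma)>0
\]
for coefficients $a,b$ depending only on $p$ and the update rule; the claim is that $a/b=\sigma$.

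Imposing stationarity $E_{\pi_{w,\mu}}[\Omega\,\mathbf 1_{\xi(0)=1}]=0$ and matching the $O(w)$ terms identifies the derivative above with (essentially) $\mu^{-1}E_{\pi_{0,\mu}}[\Omega_1\mathbf 1_{\xi(0)=1}]$, which after expanding $G$ is a linear combination of two- and three-point correlations at the neutral equilibrium. These are computed by the standard coalescing-walk dual of the voter model: $P_{\pi_{0,\mu}}(\xi(x_1)=\cdots=\xi(x_k)=1)$ is obtained by running $k$ independent walks with kernel $p$, each killed at rate $\mu$, coalescing on meeting, and assigning an independent uniform type at each killing. For Birth--Death, the selection boost to the rate at which $x$ copies onto $0$ is $w\sum_v p(v-x)G(\xi(x),\xi(v))$ and the focal site (the left argument of $G$) is the reproducing site $x$ itself; after using the $1\leftrightarrow 2$ symmetry of $\pi_{0,\mu}$ to pair $(\alpha,\delta)$ and $(\beta,\gamma)$, the two coefficients collapse to the same weighted sum of three-walk dual probabilities, giving $\sigma=1$. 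For Death--Birth, the boost is $w[f(y)-\sum_u p(u)f(u)]$ with $f(z)=\sum_v p(v-z)G(\xi(z),\xi(v))$, so the focal site $y$ sits one $p$-step from the dying site $0$; the extra neighbor-averaging leaves genuine three-walk correlations that differ between the two slots.

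The main obstacle is evaluating these Death--Birth three-point correlations and extracting the ratio $(\kappa+1)/(\kappa-1)$. By duality they reduce to the probabilities of the various coalescence patterns among three dual walks with killing, started at $0$, $y$, and $v$ with $y$ a $p$-step from $0$ and $v$ a $p$-step from $y$; splitting by which pair coalesces first and using Assumption \ref{pass} to exploit the symmetries of $p$, the only nontrivial local quantity that survives is $\sum_x p(x)p(-x)=1/\kappa$, the probability that a two-step $p$-walk returns to the origin. Careful bookkeeping then shows that this excess coincidence accounts exactly for the mismatch between the $(\alpha-\delta)$ and $(\beta-\gamma)$ coefficients, yielding $(\kappa+1)/(\kappa-1)$. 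The most delicate technical step is the $\mu\downarrow 0$ limit: both $a$ and $b$ individually vanish, but by transience in $d\ge 3$ their ratio has a finite nontrivial limit equal to $\sigma$, and confirming this requires controlled bounds on the involved coalescence probabilities uniformly in $\mu$.
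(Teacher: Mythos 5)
Your route---first-order perturbation of the mutation--selection stationary measure in $w$, evaluated by a killed coalescing dual, followed by $\mu\downarrow 0$---is genuinely different from the paper's. The paper derives Theorem \ref{TF2} from the hydrodynamic limit: it computes the reaction terms \eqref{phiBi}, \eqref{phiDi}, observes that for $2\times2$ games $\phi_B$ and $\phi_D$ are multiples of the replicator right-hand side for the perturbed matrix \eqref{2spert}, transfers the PDE behavior to the particle system via Theorem \ref{CDPG}, and concludes with Lemma \ref{phipos} that $1>2$ iff $\phi(1/2)>0$; the constants $\sigma$ then drop out of the identities $2p(0|v_1,v_1+v_2)=p(0|v_1)-p(0|v_1|v_1+v_2)$ and $2p(v_1|v_2,v_2+v_3)=(1+1/\kappa)p(v_1|v_2)-p(v_1|v_2|v_2+v_3)$. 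Your scheme is closer in spirit to the proof of Theorem \ref{TFth}, and indeed your stationarity computation would identify the first-order shift with $\mu^{-1}\langle\Omega_1 1_{(\xi(0)=1)}\rangle_{\pi_{0,\mu}}$, which converges to $\phi(1/2)$ as $\mu\downarrow0$; so the two criteria coincide for $n=2$.

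There are, however, genuine gaps. First, you are proving a statement about a different equilibrium: the $\mu\downarrow0$ limit (with $w\ll\mu$) of the mutation--selection stationary density, not the density under the stationary distributions of the game without mutation (or takeover), which is how $1>2$ is defined in \cite{TOAFN} and what Theorem \ref{CDPG} plus Lemma \ref{phipos} address. The whole point of this paper is that such regime/order-of-limits choices can change the answer (they do for $n\ge3$), so the equivalence of your criterion with the intended one must be argued, not assumed. Second, the actual content of the theorem is the values $\sigma=1$ and $\sigma=(\kappa+1)/(\kappa-1)$, and your proposal stops exactly where that computation begins: ``careful bookkeeping then shows'' the three-walk coalescence probabilities combine so that only $\sum_x p(x)p(-x)$ survives. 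That bookkeeping is the displayed identities above, and without them you have not obtained $\kappa$. Third, your description of the $\mu\downarrow0$ limit is off: in $d\ge3$ the coefficients $a,b$ do \emph{not} individually vanish---transience makes the limiting non-coalescence probabilities strictly positive, which is precisely why the formula is nondegenerate; they vanish only in $d\le2$. Finally, the expansion $\pi_{w,\mu}=\pi_{0,\mu}+w\pi_{1,\mu}+O(w^2)$ with an error uniform enough to survive $\mu\downarrow0$ is itself a nontrivial resolvent estimate that you acknowledge but do not supply.
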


\noindent
The name for $\kappa$ comes from the the fact that if each $p(z) \in \{0,1/m\}$ for all $z$ then $\kappa=m$.

In \cite{TWN} strategy $k$ is said to be favored by selection in an $n$ strategy game if, in the presence of weak selection, its frequency 
is $> 1/n$. To state their result we need some notation.
\begin{align*}
\hat G_{*,*} = \frac{1}{n} \sum_{i=1}^n G_{i,i} \qquad & \hat G_{k,*} = \frac{1}{n} \sum_{i=1}^n G_{k,i} \\
\hat G_{*,k} = \frac{1}{n} \sum_{i=1}^n G_{i,k} \qquad & \hat G = \frac{1}{n^2} \sum_{i=1}^n \sum_{j=1}^n G_{i,j}
\end{align*}
where $*$'s indicate values that have been summed over. 
To make it easier for us to prove the result and to have
nicer constants, we will rewrite their condition
for strategy $k$ to be favored  as 
\beq
\alpha_1 ( \hat G_{k,*} - \hat G) + \alpha_2 (G_{k,k}  - \hat G_{*,*} ) + \alpha_3 (\hat G_{k,*} - \hat G_{*,k})  >0
\label{TFn}
\eeq
and refer to it as {\sl Tarnita's formula}.
The parameters $\alpha_i$ depend on the population structure
and the update rule,  
but they do not depend on the number of strategies or on the entries $G_{ij}$ of the payoff matrix.
In \cite{TWN} they divide by $\alpha_3$ so $\sigma_2=\alpha_1/\alpha_3$ and $\sigma_1 = \alpha_2/\alpha_3$.

When $n>2$, \eqref{TFn} is different from the 
result for almost constant sum three strategy games on $\ZZ^d$ proved in \cite{spaceg}. 
The condition \eqref{TFn} is linear in the entries in
the game matrix while the condition (8.13) in \cite{spaceg} for the
infinite graph is quadratic.  
This paper arose from an attempt to understand this discrepancy. The resolution, as we will explain, is that
the two formulas apply to different weak selection regimes.

The path we take to reach this conclusion is somewhat lengthy. In section 2, we introduce the voter model and
describe its duality with coalescing random walk. Section 3 introduces the voter model perturbations studied
by Cox, Durrett, and Perkins \cite{CDP}. Section 4 states their reesult that when space and time are scaled
appropriately, the limit is a partial differential equation. The limit PDE is then computed for birth-death
and death-birth updating. They are reaction diffusion equations with a reaction term that is a cubic polynoimal.  
Section 5 uses the PDE limit to analyze $2 \times 2$ games. Section 6 introduces a duality for voter model perturbations,
which is the key to their analysis. Section 7 gives our results for regimes (i) and (ii) in the abstract and
our version of Tarnita's formula for games with $n\ge 3$ strategies. Sections 8--11 are devoted to proofs. 

\clearp

\section{Voter model}

Our results for evolutionary games are derived from results for a more general class of processes called
{\it voter model perturbations}. To introduce those we must first describe the {\it voter model}.
The state of the voter model at time $t$ is $\xi_t : \ZZ^d \to S$ where $S$ is a finite set of states, and
$\xi_t(x)$ gives the state of the individual at $x$ at time $t$. To formulate this class of models, let
$p(z)$ be a probability distribution on $\ZZ^d$ satisfying the conditions in Assumption \ref{pass}. 
In the voter model, the rate at which the voter at $x$ changes its opinion from $i$ to $j$ is
$$
c^v_{i,j}(x,\xi) = 1_{(\xi(x)=i)} f_j(x,\xi), 
$$
where $f_j(x,\xi) = \sum_y p(y-x) 1(\xi(y)=j)$ is the fraction of neighbors of $x$ in state $i$.
In words at times of a rate 1 Poisson process the voter at $x$ wakes up and with probability $p(y-x)$ 
imitates the opinion of the individual at $y$.

To analyze the voter model it is convenient to construct the procees on a {\it graphical representation} introduced by Harris \cite{H76} and further developed by Griffeath \cite{G78}. For each $x \in \ZZ^d$ and $y$ with $p(y-x)>0$ let $T^{x,y}_n$, $n\ge 1$ be the arrival times of a Poisson process with rate $p(y-x)$. At the times $T^{x,y}_n$, $n \ge 1$, the voter at $x$ decides to change its opinion to match the one at $y$.  To indicate this, we draw an arrow from $(x,T^{x,y}_n)$ to $(y,T^{x,y}_n)$. To calculate the state of the voter model on a finite set, we start at the bottom and work our way up
determining what should happen at each arrow. A nice feature of this approach is that it simultaneously constructs the 
process for all initial conditions so that if $\xi_0(x) \le \xi'_0(x)$ for all $x$ then for all $t>0$ 
we have $\xi_t(x) \le \xi'_t(x)$ for all $x$.

\begin{figure}[ht]
\begin{center}
\begin{picture}(320,220)
\put(30,30){\line(1,0){260}}
\put(30,180){\line(1,0){260}}
\put(40,30){\line(0,1){150}}
\put(80,30){\line(0,1){150}}
\put(120,30){\line(0,1){150}}
\put(160,30){\line(0,1){150}}
\put(200,30){\line(0,1){150}}
\put(240,30){\line(0,1){150}}
\put(280,30){\line(0,1){150}}
\put(37,18){0}
\put(77,18){0}
\put(117,18){0}
\put(157,18){1}
\put(197,18){0}
\put(237,18){1}
\put(277,18){0}
\put(37,185){1}
\put(77,185){1}
\put(117,185){1}
\put(157,185){1}
\put(197,185){1}
\put(237,185){1}
\put(277,185){0}
\put(20,27){0}
\put(20,177){$t$}
\put(120,160){\line(1,0){40}}
\put(118,157){$<$}
\put(200,145){\line(1,0){40}}
\put(198,142){$<$}
\put(80,130){\line(1,0){40}}
\put(78,127){$<$}
\put(160,110){\line(1,0){40}}
\put(158,107){$<$}
\put(40,90){\line(1,0){40}}
\put(73,87){$>$}
\put(80,75){\line(1,0){40}}
\put(113,72){$>$}
\put(240,60){\line(1,0){40}}
\put(273,57){$>$}
\put(120,45){\line(1,0){40}}
\put(153,42){$>$}
\linethickness{1.0mm}
\put(120,180){\line(0,-1){50}}
\put(80,130){\line(0,-1){55}}
\put(120,75){\line(0,-1){30}}
\put(240,180){\line(0,-1){35}}
\put(200,145){\line(0,-1){35}}
\put(160,110){\line(0,-1){80}}
\end{picture}
\caption{Voter model graphical representation and duality}
\end{center}
\end{figure}
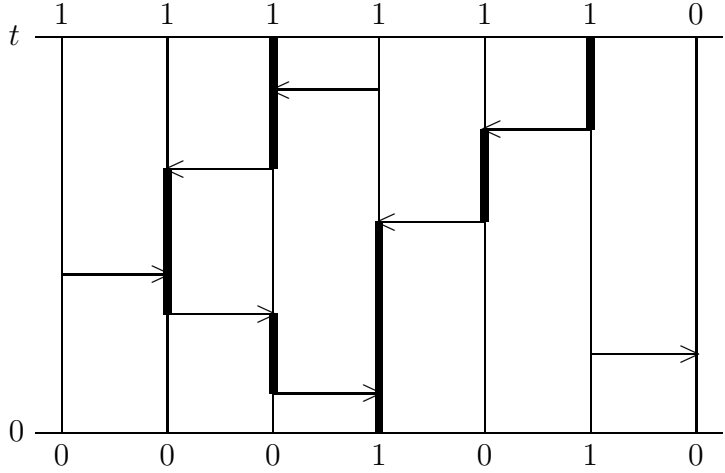

To define the {\it dual process}  we start with $\zeta^{x,t}_0 = x$ and work down the graphical representation. The process stays at $x$ until the first time $t-r$ that it encounters the tail of an arrow $x$. At this time, the particle jumps to the site $y$ at the head of the arrow, i.e., $\zeta^{x,t}_r = y$. The particle stays at $y$ until the next time the tail of an arrow is encountered and then jumps to the head of the arrow etc. 
Intuitively $\zeta^{x,t}_s$ gives the source at time $t-s$ of the opinion at $x$ at time $t$ so
$$
\xi_t(x) = \xi_{t-s}(\zeta^{x,t}(s)).
$$
The example in the picture should help explain the definitions.  The family of particles $\zeta^{x,t}_s$ are coalescing random walk.
Each particle at rate 1 makes jumps
according to $p$. If a particle $\zeta^{x,t}_s$ lands on the site occupied by $\zeta^{y,t}_s$ they coalesce to 1,
and we know that $\xi_t(x)=\xi_t(y)$. The dark lines indicate the locations of the two dual particles that coalesce.

Using duality it is easy to analyze the asymptotic behavior of the voter model. The results we
are about to quote were proved by Holley and Liggett \cite{HL}, and can also be found in Liggett's book \cite{L99}.
In dimensions 1 and 2,  random walks satisfying our assumptions are recurrent, so the voter model clusters, i.e.,
$$
P( \xi_t(x) \neq \xi_t(y) ) \le P( \zeta^{x,t}_t \neq \zeta^{x,t}_t ) \to 0.
$$  
In $d \ge 3$ random walks are transient so differences in
opinion persist. Consider, for simplicity, the case of two opinions, 0 and 1. Let $\xi^u_t$ be the voter model starting
from product measure with density $u$, i.e., the initial voter opinions are independent and $=1$ with probability $u$.
As $t \to\infty$, $\xi^u_t$ converges to a limit distribution $\nu_u$.

A consequence of this duality relation is that if we let $p(0|x)$ be the probability that
two continuous time random walks, one starting at the origin 0, and one starting at $x$ never hit then
$$
\nu_u ( \xi(0) = 1, \xi(x) = 0 ) = p(0|x) u(1-u)
$$
since in order for the two opinions to be different at time $t$, the corresponding random walks cannot hit, and they must land on
sites with the indicated opinions, an event of probability $u(1-u)$.

To extend this reasoning to three sites, consider random walks starting at 0, $x$, and $y$.
Let $p(0|x|y)$ be the probability that the three random walks never
hit and let $p(0|x,y)$ be the probability that the walks starting from $x$ and $y$ coalesce, but
they do not hit the one starting at 0. Considering the possibilities that the walks starting from $x$ and  $y$
may or may not coalesce:  
$$
\nu_u ( \xi(0) = 1, \xi(x) = 0 , \xi(y) = 0 )  = p(0|x|y) u(1-u)^2  + p(0|x,y) u(1-u).
$$  
All the finite dimensional distributions of $\nu_u$ can be computed in this way.

\clearp
 
\section{Voter model perturbations}

The processes that we consider have flip rates 
\beq
c^v_{i,j}(x,\xi) + \ep^2 h^\ep_{i,j}(x,\xi).
\label{frates}
\eeq
The perturbation functions $h^\ep_{ij}$, $j\ne i$, may be negative  but 
in order for the analysis in \cite{CDP} to work, there must be a 
law $q$ of $(Y^1, \ldots Y^M) \in (\ZZ^d)^M$ and functions $g_{i,j}^\ep \ge 0$, which 
converge to limits $g_{i,j}$ as $\ep\to 0$, so that for some $\gamma < \infty$, we have for $\ep\le \ep_0$
\beq
h^\ep_{i,j}(x,\xi) = - \gamma f_i(x,\xi) + E_{Y}[g_{i,j}^\ep(\xi(x+Y^1), \ldots \xi(x+Y^M))].
\label{vptech}
\eeq
In words, we can make the perturbation positive by adding a positive multiple of the voter flip rates.
This is needed so that \cite{CDP} can use $g^\ep_{i,j}$ to define jump rates of a Markov process.

For simplicity we will assume that both $p$ and $q$ are finite range. 
Applying Proposition 1.1 of \cite{CDP} now implies the existence of suitable $g^\ep_{i,j}$
and that all our calculations can be done using the original perturbation. However, to use Theorems 1.4 and 1.5 in \cite{CDP}
we need to suppose that 
\beq
h_{i,j} = \lim_{\ep\to 0} h^\ep_{i,j}.
\label{crate}
\eeq
has $|h_{i,j}(\xi) - h^\ep_{i,j}(\xi)| \le C \ep^r$ for some $r>0$, see (1.41) in \cite{CDP}.

\mn
{\bf Birth-Death Dynamics.}
If we let $r_{i,j}(0,\xi)$ be the rate at which the state of $0$ flips from $i$ to $j$, 
\begin{align}
r_{i,j}(0,\xi) & = \sum_{x} p(x) 1(\xi(x)=j) \sum_{y} p(y-x) \bar G(j,\xi(y))
\nonumber \\
 & = \sum_{x} p(x) 1(\xi(x)=j) \left( 1 + \ep^2 \sum_{k} f_k(x,\xi) G_{j,k} \right) 
\nonumber \\
& = f_j(0,\xi) + \ep^2 \sum_k f^{(2)}_{j,k}(0,\xi) G_{j,k},
\label{rijBD}
\end{align}
where $f^{(2)}_{j,k}(0,\xi) = \sum_x \sum_y p(x) p(y-x) 1(\xi(x)=j,\xi(y)=k)$. Thus the perturbation,
which does not depend on $\ep$ is
\beq
h_{i,j}(0,\xi) = \sum_{k} f^{(2)}_{j,k}(0,\xi) G_{j,k}.
\label{hBD}
\eeq
If $p$ is uniform on the nearest neighbors of 0, then $q$ is nonrandom and $Y^1, \ldots Y^m$
is a listing of the nearest and next nearest neighbors of 0.

\mn
{\bf Death-Birth Dynamics.}
Using the notation in \eqref{rijBD} the rate at which $\xi(0)=i$ jumps to state $j$ is
\begin{align}
\bar r_{i,j}(0,\xi) & = \frac{r_{i,j}(0,\xi)}{\sum_k r_{i,k}(0,\xi)}  
= \frac{f_j(0,\xi) + \ep^2 h_{i,j}(0,\xi)}{ 1 + \ep^2 \sum_k h_{i,k}(0,\xi) } 
\nonumber\\
& = f_j(0,\xi) + \ep^2 h_{i,j}(0,\xi) - \ep^2 f_j(0,\xi) \sum_k h_{i,k}(0,\xi) + O(\ep^4)
\label{rijDB}
\end{align}
The new perturbation, which depends on $\ep$, is
\beq
\bar h_{i,j}^\ep(0,\xi) = h_{i,j}(0,\xi) -f_j(0,\xi) \sum_k h_{i,k}(0,\xi) +O(\ep^2)
\label{hDB}
\eeq  
As noted above the technical condition \eqref{vptech} holds because $p$ has finite range.
\eqref{crate} holds with $r=2$.

\clearp

\section{PDE limit}

Let $\xi^\ep_t$ be the process with flip rates given in \eqref{frates}.
The next result is the key to the analysis of voter model perturbations on $\ZZ^d$.  
Intuitively, it says that if we rescale space to $\ep\ZZ^d$ and speed up time by $\ep^{-2}$ 
the process converges to the solution of a partial differential equation. The first thing we have to do is to define
the mode of convergence. Given $r\in(0,1)$, let $a_\ep = \lceil \ep^{r-1} \rceil \ep$, $Q_\ep = [0, a_\ep)^d$, and
$|Q_\ep|$ the number of points in $Q_\ep$. For $x \in a_\ep \ZZ^d$ and $\xi\in \Omega_\ep$ the space of all functions
from $\ep\ZZ^d$ to $S$ let
$$
D_i(x,\xi) = |\{ y \in Q_\ep : \xi(x+y) = i \}|/|Q_\ep|
$$ 

We endow $\Omega_\ep$ with the $\sigma$-field ${\cal F}_\ep$ generated by the finite-dimensional distributions. Given a sequence of measures
$\lambda_\ep$ on $(\Omega_\ep,{\cal F}_\ep)$ and continuous functions $w_i$, we say that $\lambda_\ep$ has asymptotic densities
$w_i$ if for all $0 < \delta, R < \infty$ and all $i\in S$
$$
\lim_{\ep\to 0} \sup_{x\in a_\ep\ZZ^d, |x| \le R} \lambda_\ep ( |D_i(x,\xi) - w_i(x)| > \delta ) \to 0
$$

\begin{theorem} \label{hydro}
Suppose $d \ge 3$.
Let $w_i: \RR^d \to [0,1]$ be continuous with $\sum_{i\in S} w_i = 1$. Suppose the initial conditions $\xi^\ep_0$
have laws $\lambda_\ep$ with local densities $w_i$ and let  
$$
u^\ep_i(t,x) = P( \xi^\ep_{t\ep^{-2}}(x) = i)
$$ 
If $x_\ep \to x$ then $u^\ep_i(t,x_\ep) \to u_i(t,x)$ the solution of the system of partial differential equations:
\beq
\frac{\partial}{\partial t} u_i(t,x) = \frac{\sigma^2}{2} \Delta u_i(t,x) +  \phi_i(u(t,x))
\label{PDElimit}
\eeq
with initial condition $u_i(0,x) = w_i(x)$. The reaction term 
\beq
\phi_i(u) = \sum_{j \neq i} \langle 1_{(\xi(0)=j)} h_{j,i}(0,\xi) -  1_{(\xi(0)=i)} h_{i,j}(0,\xi) \rangle_u
\label{phidef}
\eeq
where the brackets are expected value with respect to the voter model stationary distribution $\nu_u$
in which the densities are given by the vector $u$.
\end{theorem}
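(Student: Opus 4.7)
The strategy is to apply the general voter-model-perturbation hydrodynamic limit from \cite{CDP}: Theorem~\ref{hydro} is essentially their Theorem~1.4 in our notation, so the job is (a) to verify that the flip rates \eqref{frates}, via the structural assumption \eqref{vptech} and the convergence bound \eqref{crate}, meet the CDP hypotheses, and (b) to identify the diffusion coefficient $\sigma^2$ and the reaction term $\phi_i$ produced by that machinery with the formulas stated in Theorem~\ref{hydro}.

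At the process level, I would write the generator of the rescaled process as $L^\ep = \ep^{-2} L^v + L^h$, where $L^v$ is the pure voter generator on $\ep\ZZ^d$ and $L^h$ carries the perturbation weighted by $\ep^2$. On smooth test functions of the form $\sum_{x\in\ep\ZZ^d} 1_{\{\xi(x)=i\}}\varphi(x)$, the voter piece gives, after Taylor expansion and use of the symmetries in Assumption~\ref{pass}, the discretized Laplacian $\tfrac{\sigma^2}{2}\Delta\varphi$ with $\sigma^2 = \sum_z z_1^2 p(z)$; the $\ep^{-2}$ time change is precisely what makes this contribution $O(1)$. Meanwhile $\ep^{-2}\cdot L^h$ is also $O(1)$ since the perturbation is weighted by $\ep^2$, and it depends on $\xi$ through the finite-site indicators appearing in \eqref{vptech}.

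The main obstacle is the local-equilibration / separation-of-time-scales argument. On the accelerated clock, the voter part runs at rate $\ep^{-2}$ while the spatial gradient of the macroscopic profile $u(t,\cdot)$ is only $O(1)$, so within a mesoscopic box $Q_\ep$ around $x$ the configuration should look, in a suitable weak sense, like a sample from the voter stationary measure $\nu_{u(t,x)}$. Making this rigorous is the technical heart of \cite{CDP}; the ingredients are (i) duality with coalescing random walks, which reduces the joint law of $\xi^\ep$ at the sites $x+Y^1,\ldots,x+Y^M$ to hitting/coalescence probabilities and matches them to the multipoint identities for $\nu_u$ recorded in Section~2, (ii) transience of random walks in $d\ge 3$ to keep $\nu_u$ nondegenerate, (iii) block averaging over $Q_\ep$ (which is why the asymptotic-density mode of convergence is defined using these boxes), and (iv) the $O(\ep^r)$ estimate \eqref{crate} that allows one to replace $h^\ep_{i,j}$ by its limit $h_{i,j}$ inside the averages. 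Once the local average of $1_{\{\xi(x)=i\}} h_{i,j}(x,\xi)$ has been identified with $\langle 1_{\{\xi(0)=i\}} h_{i,j}(0,\xi) \rangle_{\nu_{u(t,x)}}$, summing the gain minus the loss transitions between states yields exactly \eqref{phidef}.

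Finally, tightness of the empirical densities $D_i(\cdot,\xi^\ep_{t\ep^{-2}})$ follows from standard moment bounds together with the smoothing provided by $\ep^{-2}L^v$, and any subsequential limit of $u^\ep_i(t,x)$ solves the semilinear reaction-diffusion system \eqref{PDElimit} in the martingale-problem sense; uniqueness for that PDE, which is immediate because $\phi_i$ is a polynomial in $u$ (hence locally Lipschitz) and the $g_{i,j}$ are bounded, pins down the full sequence and gives pointwise convergence $u^\ep_i(t,x_\ep)\to u_i(t,x)$ whenever $x_\ep\to x$.
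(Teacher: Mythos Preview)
Your proposal is correct and matches the paper's treatment: the paper does not prove Theorem~\ref{hydro} from scratch but simply records that ``this result is Theorem~1.2 in \cite{CDP},'' so deferring to the CDP machinery and checking that \eqref{vptech} and \eqref{crate} place us in its scope is exactly what is required. One small correction: the relevant statement in \cite{CDP} is their Theorem~1.2 (the hydrodynamic limit), not Theorem~1.4; Theorems~1.4 and~1.5 there are the results used later for Theorem~\ref{CDPG}.
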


\noindent
This result is Theorem 1.2 in \cite{CDP}.
Intuitively, on the fast time scale the voter model runs at rate $\ep^{-2}$ versus the
perturbation at rate 1, so the process is always close to the voter equilibrium for 
the current density vector $u$. Thus, we can compute the rate of change of $u_i$ by assuming the nearby
sites are in that voter model equilibrium.

In a homogeneously mixing population the frequencies of the strategies in an evolutionary game follow the 
replicator equation, see e.g., Hofbauer and Sigmund's book \cite{HS98}: 
\beq
\frac{du_i}{dt}  = \phi^i_R(u) \equiv u_i \left( \sum_k G_{i,k} u_k - \sum_{j,k} u_j G_{j,k} u_k \right). 
\label{rep}
\eeq
We will now compute the reaction terms $\phi_i$ for our two examples.

\mn
{\bf Birth-Death Dynamics.} On $\ZZ^d$ we let $v_i$ be independent with $P(v_i = x ) = p(x)$. Let 
$$
 p_1 =  p(0|v_1|v_1+v_2) \quad\hbox{and}\quad p_2 =  p(0|v_1,v_1+v_2).
$$
In this case the limiting PDE in Theorem \ref{hydro} is 
$\partial u_i/\partial_t = (1/2d) \Delta u + \phi^i_B(u)$ where
\beq
\phi^i_B(u) =   p_1\phi^i_R(u) + p_2 \sum_{j\neq i}  u_i u_j (G_{i,i}-G_{j,i}+ G_{i,j}-G_{j,j}). 
\label{phiBi}
\eeq
See Section 12 of \cite{spaceg} for a proof.
Formula (4.8) in \cite{spaceg} implies that 
$$
2p(0|v_1,v_1+v_2) = p(0|v_1) - p(0|v_1|v_1+v_2),
$$
so it is enough to know the two probabilities on the right-hand side.

If coalescence is impossible then $p_1=1$ and $p_2=0$ and $\phi^i_B = \phi^i_R$.
There is a second more useful connection to the replicator equation. Let
$$
A_{i,j} = \frac{ p_2 }{ p_1   } (G_{i,i} + G_{i,j} - G_{j,i} - G_{j,j} ).
$$
The matrix is skew symmetric. That is, $A_{i,i}=0$ and if $i\neq j$ $A_{i,j} = -A_{j,i}$.
This implies $\sum_{i,j} u_i A_{i,j} u_j = 0$ and it follows that
$\phi^i_B(u)$ is $p_1$ times the RHS of the replicator equation for the game matrix $A+G$. 
This observation is due to Ohtsuki and Nowak \cite{ON06} who studied the limiting ODE
that arises from the pair approximation.

\mn
{\bf Death-Birth Dynamics.} On $\ZZ^d$ we let $v_i$ be independent with $P(v_i = x ) = p(x)$, let 
$$
\bar p_1 =  p(v_1|v_2|v_2+v_3) \quad\hbox{and}\quad \bar p_2 =  p(v_1|v_2,v_2+v_3).
$$
With Death-Birth updating the limiting PDE is $\partial u_i/\partial t = (1/2d) \Delta u + \phi^i_D(u)$ where
\begin{align}
\phi^i_D(u) &= \bar p_1 \phi^i_R(u)   + \bar p_2 \sum_{j\neq i}  u_i u_j (G_{i,i} -G_{j,i}+ G_{i,j} -G_{j,j}) 
\nonumber\\
& -(1/\kappa) p(v_1|v_2)  \sum_{j \neq i}  u_i u_j (  G_{i,j}-G_{j,i}).
\label{phiDi}
\end{align} 
where $\kappa = 1/P( v_1+v_2=0)$ is the ``effective number of neighbors.''
Again see Section 12 of \cite{spaceg} for a proof.
The first two terms are the ones in \eqref{phiBi}.
The similarity is not surprising since the numerators of the flip rates in \eqref{rijDB} are the flip rates in \eqref{rijBD}.
The third term comes from the denominator in \eqref{rijDB}. 
Formula (4.9) in \cite{spaceg} implies that 
$$
2p(v_1|v_2,v_2+v_3) = (1+1/\kappa) p(v_1|v_2) - p(v_1|v_2|v_2+v_3),
$$
so again it is enough to know the two probabilities on the right-hand side.

As in the Birth-Death case, if we let 
$$
\bar A_{i,j}  =  \frac{\bar p_2 }{\bar p_1  } (G_{i,i} + G_{i,j} - G_{j,i} - G_{j,j} ) 
- \frac{p(v_1|v_2)} {  \kappa \bar p_1 } (G_{i,j} - G_{j,i}),
$$
then $\phi^D_i(u)$ is $\bar p_1$ times the RHS of the replicator equation for $\bar A+G$.

\clearp

\section{Two strategy games}

In a homogeneously mixing population, the fraction of individuals playing the first strategy,
$u$, evolves according to the replicator equation \eqref{rep}:
\begin{align}
\frac{du}{dt} & = u \{ \alpha u + \beta (1-u) 
- u [ \alpha u + \beta (1-u) ] - (1-u) [ \gamma u + \delta (1-u) ] \} 
\nonumber\\
& = u(1-u)[\beta - \delta + \Gamma u ] \equiv \phi_R(u)
\label{rep2}
\end{align}
where we have introduced $\Gamma = \alpha - \beta - \gamma + \delta$. Note that $\phi_R(u)$ is a cubic
with roots at 0 and at 1. If there is a fixed point in $(0,1)$
it occurs at
\beq
\bar u =  \frac{\beta-\delta }{\beta- \delta  + \gamma - \alpha }
\label{fixp}
\eeq

Using results from the previous section gives the following.

\mn
{\bf Birth-Death Dynamics.} 
The limiting PDE is $\partial u/\partial t = (1/2d) \Delta u + \phi_B(u)$ where 
$\phi_B(u)$ is $p_1$ times the RHS of the replicator equation for the  game
\beq
\begin{pmatrix} \alpha & \beta + \theta \\ \gamma - \theta & \delta \end{pmatrix}
\label{2spert}
\eeq
and $\theta = (p_2/p_1)(\alpha+\beta-\gamma-\delta)$.

\mn
{\bf Death-Birth Dynamics.} 
The limiting PDE is $\partial u/\partial t = (1/2d) \Delta u + \phi_D(u)$ where
$\phi_D(u)$ is $\bar p_1$ the RHS of the replicator equation for the game in \eqref{2spert} but now
$$
\theta = \frac{\bar p_2}{\bar p_1}(\alpha+\beta-\gamma-\delta) - \frac{p(v_1|v_2)}{\kappa \bar p_1}(\beta-\gamma).
$$

\subsection{Analysis of $2 \times 2$ games}

Suppose that the limiting PDE is $\partial u/\partial t = (1/2d) \Delta u + \phi(u)$ 
where $\phi$ is a cubic with roots at 0 and 1. There are four possibilities

\begin{center}
\begin{tabular}{ccc}
$S_1$ & $\bar u$ attracting & $\phi'(0)>0$, $\phi'(1)>0$ \\
$S_2$ &$\bar u$ repelling  & $\phi'(0)<0$, $\phi'(1)<0$ \\
$S_3$ & $\phi<0$ on $(0,1)$   & $\phi'(0)<0$, $\phi'(1)>0$ \\
$S_4$ & $\phi>0$ on $(0,1)$   & $\phi'(0)>0$, $\phi'(1)<0$
\end{tabular}
\end{center}

\noindent
To see this, we draw a picture. For convenience, we have drawn the cubic as a piecewise linear function.

\begin{center}
\begin{picture}(220,105)
\put(15,75){$S_1$}
\put(30,75){\line(1,0){60}}
\put(30,75){\line(1,1){15}}
\put(45,90){\line(1,-1){30}}
\put(75,60){\line(1,1){15}}
\put(82,80){\vector(-1,0){14}}
\put(38,70){\vector(1,0){14}}
\put(105,75){$S_2$}
\put(120,75){\line(1,0){60}}
\put(120,75){\line(1,-1){15}}
\put(135,60){\line(1,1){30}}
\put(165,90){\line(1,-1){15}}
\put(142,80){\vector(-1,0){14}}
\put(158,70){\vector(1,0){14}}
\put(15,30){$S_3$}
\put(30,30){\line(1,0){60}}
\put(30,30){\line(2,-1){30}}
\put(60,15){\line(2,1){30}}
\put(70,35){\vector(-1,0){20}}
\put(105,30){$S_4$}
\put(120,30){\line(1,0){60}}
\put(120,30){\line(2,1){30}}
\put(150,45){\line(2,-1){30}}
\put(140,25){\vector(1,0){20}}
\end{picture}
\end{center} 

We say that {\it $i$'s take over} if for all $K$
$$
P( \xi_s(x) = i \hbox{ for all $x\in[-K,K]^d$ and all $s \ge t$}) \to 1 \quad\hbox{as $t\to\infty$.}
$$
Let $\Omega_0 = \{ \xi : \sum_x \xi(x) = \infty, \sum_x (1-\xi(x)) = \infty \}$
be the configurations with infinitely many 1's and infinitely many 0's. We say that
{\it coexistence occurs} if there is a stationary distribution $\mu$ for the spatial model with $\mu(\Omega_0)=1$. 
The next result follows from Theorems 1.4 and 1.5 in \cite{CDP}. The PDE assumptions and the other conditions
can be checked as in the arguments in Section I.4 of  \cite{CDP} for the Lotka-Volterra system. 
Intuitively, the result says that the behaviior of the particle system for small $\ep$ is the same as
that of the PDE.   

\begin{theorem} \label{CDPG}
If $\ep < \ep_0(G)$, then:\hbr 
In case $S_3$, 2's take over. In case $S_4$, 1's take over. \hbr
In case $S_2$, 1's take over if $\bar u < 1/2$, and 2's take over if $\bar u > 1/2$.\hbr
In case $S_1$, coexistence occurs. Furthermore, if $\delta>0$ and $\ep < \ep_0(G,\delta)$ then 
any stationary distribution with $\mu(\Omega_0)=1$ has
$$
\sup_x |\mu(\xi(x) =1 )- \bar u| < \delta.
$$
\end{theorem}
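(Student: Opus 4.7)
My plan is to deduce Theorem~\ref{CDPG} from Theorems~1.4 and~1.5 of \cite{CDP}, which are the general tools that translate asymptotic behavior of the limiting PDE \eqref{PDElimit} into corresponding statements for the voter model perturbation on the rescaled lattice $\ep\ZZ^d$. Before invoking them I would check their hypotheses. The perturbation $h^\ep$ is of finite range and satisfies \eqref{vptech} and \eqref{crate}, as already noted in Section~3; the reaction term $\phi$ is a cubic and therefore automatically Lipschitz, with the comparison principle available for the scalar equation; and Theorem~\ref{hydro} supplies the required local-equilibrium input. The remaining ingredients -- the block condition for survival and the PDE-level stability statements demanded by \cite{CDP} -- can be verified exactly as in the Lotka--Volterra analysis in Section~I.4 of \cite{CDP}.

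Cases $S_3$ and $S_4$ then follow by a monotonicity argument. In either case $\phi$ keeps a constant sign on $(0,1)$, so the reaction-diffusion PDE is monotone and every solution with nontrivial initial data converges uniformly on compact sets to the unique dominant equilibrium ($0$ in $S_3$, $1$ in $S_4$). Feeding this PDE behavior into Theorem~1.5 of \cite{CDP} yields takeover by $2$'s in $S_3$ and by $1$'s in $S_4$ for all sufficiently small $\ep$.

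Case $S_2$ is bistable: $0$ and $1$ are attractors while $\bar u$ is repelling. The PDE then admits a unique (up to translation) traveling-wave solution $u(t,x) = U(x-ct)$ connecting $1$ to $0$, whose speed has the sign of $\int_0^1 \phi(u)\,du$. For the cubic $\phi(u) = a\,u(1-u)(u-\bar u)$ with $a>0$ a direct integration gives $\int_0^1 \phi = a(1-2\bar u)/12$, so the wave propagates so as to spread $1$'s when $\bar u<1/2$ and to spread $0$'s when $\bar u>1/2$. With this PDE picture in hand, Theorem~1.5 of \cite{CDP} again delivers the stated takeover conclusion.

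Finally, in case $S_1$ the interior root $\bar u$ is attracting with basin $(0,1)$, and the spatially homogeneous PDE solution started from any interior density converges to $\bar u$; this is precisely the situation to which Theorem~1.4 of \cite{CDP} applies, yielding a stationary distribution $\mu$ supported on $\Omega_0$ with one-point marginals close to $\bar u$, so that $\sup_x|\mu(\xi(x)=1)-\bar u|<\delta$ once $\ep_0(G,\delta)$ is chosen in terms of the rate of PDE convergence to $\bar u$. The main obstacle throughout is not any new analysis but the bookkeeping: one has to verify that the PDE assumptions of Theorems~1.4 and~1.5 of \cite{CDP} (especially the block/stability conditions needed in $S_1$ and the bistable traveling-wave comparison in $S_2$) hold in the precise form required by those theorems, after which Theorem~\ref{CDPG} follows directly.
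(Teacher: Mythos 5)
Your proposal matches the paper's treatment: the paper proves Theorem \ref{CDPG} simply by invoking Theorems 1.4 and 1.5 of \cite{CDP} and noting that the PDE assumptions and other conditions can be checked as in the Lotka--Volterra analysis of Section I.4 of \cite{CDP}, which is exactly your strategy. Your additional PDE-level details (monotone convergence in $S_3$, $S_4$, the sign of $\int_0^1\phi$ determining the bistable wave speed in $S_2$, and the attracting interior root in $S_1$) are correct and simply flesh out the verification the paper leaves to the reader.
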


This result, after some algebra gives Tarnita's formula for two person games, Theorem \ref{TF2}.
The key observation is

\begin{lemma} \label{phipos}
$1 > 2$ if and only if the reaction term in the PDE has $\phi(1/2)>0$.
\end{lemma}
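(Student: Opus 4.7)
The plan is to exploit the fact that the cubic $\phi$ vanishes at $0$ and $1$, so its sign on $(0,1)$ is completely determined by which of the four patterns $S_1$--$S_4$ it falls into. In each pattern the sign of $\phi(1/2)$ is elementary to read off, and Theorem \ref{CDPG} tells us exactly what the equilibrium frequency of $1$'s is for the particle system; checking case by case that these two conditions match will give the lemma.

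The cases $S_3$ and $S_4$ are immediate: if $\phi<0$ on all of $(0,1)$ then $\phi(1/2)<0$ and Theorem \ref{CDPG} says the $2$'s take over, so the frequency of $1$'s is $0<1/2$; if $\phi>0$ on $(0,1)$ then $\phi(1/2)>0$ and the $1$'s take over, so the frequency is $1>1/2$. In cases $S_1$ and $S_2$ the cubic has a unique interior root $\bar u$ whose position relative to $1/2$ controls both quantities. In $S_1$ we have $\phi>0$ on $(0,\bar u)$ and $\phi<0$ on $(\bar u,1)$, so $\phi(1/2)>0 \iff \bar u>1/2$; Theorem \ref{CDPG} says any coexistence stationary distribution $\mu$ concentrates $\mu(\xi(x)=1)$ arbitrarily close to $\bar u$ when $\epsilon$ is small, so $1>2 \iff \bar u>1/2$. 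In $S_2$ the signs are reversed, giving $\phi(1/2)>0 \iff \bar u<1/2$, which by Theorem \ref{CDPG} is exactly the condition for the $1$'s to take over.

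The only real subtlety is the interpretation of ``equilibrium'' in case $S_2$, where both the all-$0$ and all-$1$ configurations are absorbing and there is no coexistence stationary distribution supported on $\Omega_0$; one must read ``equilibrium frequency'' via the $t\to\infty$ limit from a nontrivial initial condition, and Theorem \ref{CDPG} resolves this by identifying the unique asymptotic outcome in terms of whether $\bar u$ lies above or below $1/2$. In $S_1$ one additionally takes $\epsilon<\epsilon_0(G,\delta)$ with $\delta<|\bar u-1/2|/2$ to guarantee that the stationary density sits on the same side of $1/2$ as $\bar u$; this is routine and does not affect the equivalence.
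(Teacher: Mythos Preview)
Your proof is correct and follows essentially the same case-by-case analysis as the paper's own proof, using the sign structure of the cubic $\phi$ on $(0,1)$ together with Theorem~\ref{CDPG}. Your version is in fact more carefully written: the paper only sketches one direction in each case, whereas you spell out the full biconditional and flag the issue of what ``equilibrium frequency'' means in case $S_2$.
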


\begin{proof} Clearly $\phi(1/2)>0$ in case $S_4$ but not
  $S_3$. In case $S_1$, $\phi(1/2)>0$
implies $\bar u > 1/2$, while in case $S_2$
$\phi(1/2)>0$  implies $\bar u < 1/2$
and hence the 1's take over. 
\end{proof}

\noindent
With this result in hand, Theorem \ref{TF2} follows from the formulas for $\phi_B$ and $\phi_D$.

\clearp

\section{Duality for voter model perturbations} \label{sec:dualvmp}

Our next step is to introduce a duality that generalizes the one for the voter model.  Suppose now that we have a voter model perturbation of the form 
$$
h^\ep_{i,j}(x,\xi) = - \gamma f_i(x,\xi) + E_Y[g^\ep_{i,j}(\xi(x+Y^1), \ldots \xi(x+Y^M))]
$$
For each $x \in \ZZ^d$ and $y$ with $p(y)>0$ let $T^{x,y}_n$, $n\ge 1$ be the arrival times of a Poisson process with rate $p(y)$. At the times $T^{x,y}_n$, $n \ge 1$, $x$ decides to change its opinion to match the one at $x+y$ where the arithmetic is done modulo $L$ in each coordinate. We call this a voter event. 

To accommodate the perturbation we let 
$$
\|g^\ep_{i,j}\| = \sup_{\eta \in S^M} g^\ep_{i,j}(\eta_1, \ldots \eta_M)
$$ 
and introduce Poisson processes
$T^{x,i,j}_n$, $n\ge 1$ with rate $r_{i,j} = \ep^2 \|g^\ep_{i,j}\|$, and independent random variables $U^{x,i,j}_n$, $n\ge 1$
uniform on $(0,1)$. At the times $t=T^{x,i,j}_n$ we draw arrows from $x$ to $x+Y^i$ for $1\le i \le M$. 
We call this a branching event.
If $\xi_{t-}(x)=i$ and 
\beq
g_{i,j}^\ep(\xi_{t-}(x+Y^1), \ldots \xi_{t-}(x+Y^m)) < r_{i,j}U^{x,i,j}_n
\label{jrule}
\eeq
then we set $\xi_t(x)=j$. The uniform random variables slow down the transition rate from the maximum possible rate $r_{i,j}$ to the one appropriate
for the current configuration.

To define the dual, suppose we start with particles at $X_1(0), \ldots X_k(0)$ at time $t$. We let $K(0)=k$ be the number of particles,
$J(0) = \{1, 2, \ldots k\}$ be the indices of the active particles, and $T_0=0$. Suppose we have constructed the
dual up to time $T_m$ with $m \ge 0$. No particle moves from its position at time $T_m$ until the first time $r > T_{m}$ 
that the tail of an arrow touches one of the active particles at time $t-r$. Call that time $T_{m+1}$.
We extend the definitions of $K(t)$, $X_i(t)$, $i \le K(t)$, and $J(t)$ to be constant on $[T_m,T_{m+1})$.

If the arrow is from a voter event affecting particle number $i$ then $X_i$ jumps to the head of the arrow at time $T_{m+1}$. 
If there is another active particle $X_j$ on that site, the two coalesce to 1 and the higher numbered particle
is removed from the active set at time $T_{m+1}$. If the event is a branching event, we add new particles numbered $K(T_m)+k$ 
at $X_i(T_m)+Y^k$ for $1\le k \le M$ and set $K(T_{m+1})=K(T_m)+M$. If there are collisions between the newly created particles
and existing active particles, those newly created particles are not added to the active set. Our proof will show that in
the situation covered in Theorem \ref{odeconv} the probability of a collision at a branching event will go to zero as $N\to\infty$

Durrett and Neuhauser \cite{DN} called $I(s) = \{ X_i(s) : i \in J(s) \}$ the influence set because 

\begin{lemma}
If we know the values of $\xi_{t-s}$
on $I(s)$, the locations and types of arrows that occurred at the jump times $T_m \le s$, and the associated uniform random variables $U_m$ then we can compute the values of $\xi_t$ at $X_1(0), \ldots X_k(0)$ by working our way up the graphical representation
starting from time $t-s$ and determining the changes that should be made in the configuration at each jump time.
\end{lemma}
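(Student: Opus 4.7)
The plan is to argue by induction on the number $N(s)$ of jump times $T_m\in(0,s]$ in the dual construction, and at each step to read off the needed transition rule directly from the graphical representation. The base case $N(s)=0$ is automatic: by definition of $T_1$, no arrow in $(t-s,t]$ has its tail at any of $X_1(0),\ldots,X_k(0)$, so none of those sites can change opinion during that time, $I(s)=\{X_1(0),\ldots,X_k(0)\}$, and $\xi_t(X_i(0))=\xi_{t-s}(X_i(0))$ for every $i$.

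For the inductive step, suppose the lemma is known whenever there are at most $n$ jump times in the interval, and suppose $N(s)=n+1$. Let $T_1$ be the first (backward) jump time, at forward time $\tau=t-T_1$. Because no arrow touches an active particle during $(\tau,t]$, we have $\xi_t(X_j(0))=\xi_\tau(X_j(0))$ for every $j$, and only the site at which the arrow at time $\tau$ acts can differ between $\xi_{\tau-}$ and $\xi_\tau$. Split into the two cases that define the dual:

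\emph{Voter event.} The arrow at time $\tau$ is from some $X_i(0)$ to $X_i(0)+y$, and the voter rule gives $\xi_\tau(X_i(0))=\xi_{\tau-}(X_i(0)+y)=\xi_{\tau-}(X_i(T_1))$. For $j\neq i$ we have $\xi_\tau(X_j(0))=\xi_{\tau-}(X_j(0))=\xi_{\tau-}(X_j(T_1))$. Hence $\xi_t(X_j(0))$ for $1\le j\le k$ is determined by $\xi_{\tau-}$ at the positions $X_j(T_1)$, $j\in J(T_1)$, using only the arrow at time $\tau$; coalescence just means two of these are equal, which is harmless.

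\emph{Branching event.} A Poisson clock $T^{X_i,i_0,j_0}_n$ fires, so we know (from the graphical representation) the index $i$ of the affected particle, the labels $(i_0,j_0)$, and the uniform variable $U^{x,i_0,j_0}_n$. The rule \eqref{jrule} computes $\xi_\tau(X_i(0))$ as an explicit function of $\xi_{\tau-}(X_i(0))$ and $\xi_{\tau-}(X_i(0)+Y^1),\ldots,\xi_{\tau-}(X_i(0)+Y^M)$ together with that uniform variable; for $j\ne i$ again $\xi_\tau(X_j(0))=\xi_{\tau-}(X_j(0))$. So $\xi_t(X_j(0))$, $1\le j\le k$, is determined by the values of $\xi_{\tau-}$ at all positions in $\{X_j(T_1):j\in J(T_1)\}$, which is exactly the set of active dual particles just after the branching step (the collision rule keeps this set correct).

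In either case the quantities we now need are the values of $\xi_{\tau-}$ on the active set $\{X_j(T_1):j\in J(T_1)\}$. Running the dual from time $T_1$ to time $s$ with these particles as initial data has exactly $n$ jump times, namely $T_2,\ldots,T_{n+1}$, and the same influence set $I(s)$; the inductive hypothesis therefore computes these values from $\xi_{t-s}\!\restriction_{I(s)}$ together with the arrows and uniforms at $T_2,\ldots,T_{n+1}$. Combining with the one-step rule at $T_1$ completes the induction. The only place any care is required is the bookkeeping for coalescence at voter events and for collisions of newly created particles with existing ones at branching events; both are handled by the definitions of $J(\cdot)$ and $K(\cdot)$, so nothing substantive obstructs the argument.
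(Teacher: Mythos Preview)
Your induction on the number of jump times is correct and is exactly the natural way to make precise what the paper means by ``working our way up the graphical representation.'' The paper itself does not give a proof here: it simply says the statement ``should be clear from the construction'' and refers to Section~2.6 of \cite{CDP} for the formal argument, so your write-up is in fact more detailed than what appears in the paper, but follows the same underlying idea.
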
 

This should be clear from the construction. A formal proof can be found in Section 2.6 of \cite{CDP}.
The computation process, as it is called in \cite{CDP} is complicated, but is useful because
up to time $t/\ep^2$ there will only be $O(1)$ branching events. In between these events there will be many random
walk steps that on the rescaled lattice will converge to Brownian motions. 
 
\clearp

\section{Results for the torus} \label{sec:torus}

To motivate the results that we are about to state, recall that if we have a random walk on the torus $\TL = (\ZZ \bmod L)^d$
that takes jumps at rate 1 with a distribution $p$ that satisfies our assumptions in Assumption \ref{pass}, then: 

\begin{itemize}
\item
One random  walk needs of order $L^2$ steps to converge to the uniform distribution on the torus (see Proposition \ref{rw1} in Section \ref{sec:pf4}).
\item
Two independent random walks starting from randomly chosen points will need of order $N=L^d$ steps to meet
for the first time. See e.g., \cite{Cox89}
\end{itemize}

\subsection{Regime 1. $\ep_L^{-1} \ll L$, or $w \gg N^{-2/d}$}

In this case when we rescale space by multiplying by $\ep_L$ 
then the limit of the torus is all of $\RR^d$ and the PDE limit, Theorem \ref{hydro}, holds. Thus, 
one can apply results from Section 7 in \cite{CDP} to show that the conclusions of Theorem \ref{CDPG} hold in cases $S_2-S_4$. 
Indeed, since we are on the torus the linearly growing ``dead zone'' produced by the block construction eventually coves the 
entire torus and the weaker type becomes extinct at a time $O(L)$.

Case $S_1$ is more interesting. We cannot have a stationary distribution since we are dealing with a Markov chain
on a finite set in which $\xi(x)=i$ for all $x$ are absorbing. However, as is the case for many other
particle systems, e.g., the contact process on a finite set, we will have a quasi-stationary distribution that will persist 
for a long time. Using the comparison with oriented percolation described in Chapter 6 of \cite{CDP} and in \cite{Dur95}  
we can show 

\begin{theorem} \label{expsurv}
Consider a two strategy evolutionary game in case 1, so $\phi(u) = \lambda u(u-\rho)(1-\rho)$.
Suppose that $\ep_L^{-1} \sim C L^\alpha$ where $0<\alpha<1$ and that for each $L$ we start from a product measure in which 
each type has a fixed positive density.  Let $N_1(t)$ be the number of sites occupied by 1's at time $t$. 
There is a $c > 0$ so that for any $\delta > 0$ if $L$ is large and
$\log L \le t \le \exp( c L^{(1-\alpha)d})$ then $N_1(t)/N \in (\rho-\delta,\rho+\delta)$ with high probability. 
\end{theorem}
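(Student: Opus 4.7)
The plan is to combine the PDE limit of Theorem \ref{hydro} with the block construction of \cite{CDP} and an oriented-percolation comparison on the torus, as in Section I.6 of \cite{CDP} and Chapter 4 of \cite{Dur95}. In case $S_1$ the reaction term has the form $\phi(u) = \lambda u(\rho-u)(1-u)$ with $\lambda > 0$ and $\rho \in (0,1)$ an attracting zero, so the limiting PDE has $u \equiv \rho$ as a stable spatially homogeneous equilibrium whose basin under the ODE $du/dt = \phi(u)$ is all of $(0,1)$.

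After rescaling space by $\ep_L$, $\TL$ becomes a discrete torus of macroscopic side length $\ep_L L \sim C^{-1}L^{1-\alpha} \to \infty$. Pick a macroscopic box size $L_0 = L_0(\delta,\eta)$ and a macroscopic time $T=T(\delta,\eta)$ large enough that any solution of the PDE whose block averages on $[-3L_0,3L_0]^d$ lie in $[\rho-\delta,\rho+\delta]$ is uniformly within $\delta/2$ of $\rho$ on $[-L_0,L_0]^d$ at time $T$. Theorems 1.4 and 1.5 of \cite{CDP} (already used in case $S_1$ of Theorem \ref{CDPG}) then give, for $\ep_L$ small, the usual \emph{good block} event: if the empirical density on the rescaled block of side $L_0$ and on its neighbors is within $\delta$ of $\rho$, then at time $T$ the central block is still in that state with probability $\ge 1-\eta$.

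Tile $\TL$ with $M_L := \lfloor \ep_L L / L_0 \rfloor^d \sim (CL_0)^{-d} L^{(1-\alpha)d}$ disjoint spatial blocks and iterate the good-block event in discrete time steps of length $T$. The resulting $\{0,1\}$-valued process on $\TL \times \ZZ_+$ is an $m$-dependent random field with density $\ge 1-\eta$, and by Grimmett--Liggett it stochastically dominates supercritical Bernoulli site percolation once $\eta$ is small enough, which we may arrange by taking $\delta$ small and $T,L_0$ large. A standard Peierls/contour argument then shows that on a torus of $M_L$ sites, the extinction time of the good-block process starting from the ``all good'' state is at least $\exp(cM_L) = \exp(c' L^{(1-\alpha)d})$ with probability tending to $1$, and that at every time in this window at least $(1-\delta)M_L$ of the blocks are good; summing block densities yields $N_1(t)/N \in (\rho-\delta',\rho+\delta')$ for $\delta'$ depending linearly on $\delta$.

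To put the particle system into the regime where the block construction can be started, a short warm-up argument handles the initial window: starting from a product measure with density $p \in (0,1)$, the PDE drives the local density to $\rho$ exponentially fast at rate $|\phi'(\rho)|$, and the mesoscopic fluctuations around the PDE, controlled by the local CLT-type estimates in Section 2 of \cite{CDP}, are absorbed by a union bound over the $M_L$ blocks after a time that is at most of order $\log L$ on the appropriate scale. The main obstacle I anticipate is the quantitative matching: one must show that the error estimates for the block event in Theorem 1.5 of \cite{CDP} decay in $\ep_L$ fast enough that, when summed over the roughly $M_L \exp(cM_L)$ spacetime blocks in the survival window, the total failure probability is still $o(1)$, and that the Peierls comparison on the finite torus really yields survival exponential in the full volume $M_L$ rather than merely in the torus width $L^{1-\alpha}$. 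Both facts are routine for the contact process on $\TL$ but require careful bookkeeping for the voter perturbation here.
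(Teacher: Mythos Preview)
Your proposal follows essentially the same route as the paper: PDE limit $\Rightarrow$ block construction on blocks of macroscopic side $O(1)$ (i.e., microscopic side $O(\ep_L^{-1})$), producing an $M$-dependent oriented percolation on a lattice of $K^d \sim c\,L^{(1-\alpha)d}$ sites, then domination by iid Bernoulli and an exponential-in-volume survival estimate for the percolation process.  Two remarks are worth making.

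First, your anticipated ``main obstacle'' is not one.  You do \emph{not} need the block-event error to decay in $\ep_L$ fast enough to be summed over $M_L\exp(cM_L)$ space-time blocks.  The logic is: fix $\theta>0$; for $L$ large the block event has probability $\ge 1-\theta$ uniformly; the resulting $M$-dependent field dominates iid Bernoulli($1-\theta'$) by Liggett--Schonmann--Stacey \cite{LSS} (not ``Grimmett--Liggett''); and then the statement ``supercritical oriented percolation on a torus of volume $K^d$, started from all occupied, survives to time $\exp(cK^d)$ with high probability'' is a purely percolation fact, proved once and for all with no reference back to the particle system.  The paper invokes Mountford's argument \cite{TM2} for this last step.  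So there is no union bound over exponentially many blocks.

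Second, your warm-up is slightly more elaborate than needed.  Since the initial law is product with a fixed positive density $u_1$, each small box of side $a_\ep$ has density $\ge v_0:=\min(\rho/2,u_1/2)$ with failure probability $\le\exp(-cL^{(1-r)d})$; a union bound over polynomially many small boxes shows that with high probability \emph{every} macroscopic block is good at time $0$, so the percolation comparison starts from the all-occupied state immediately.  The $\log L$ in the statement is there only to allow the block construction to bring the density from $v_0$ up to $\rho-\delta$; no separate CLT-type estimate is required.
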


The $\log L$ time needed to come close to equilibrium could be replaced by a fixed time $T$ that depends on $\lambda$, $\rho$, $\delta$,
and the initial density of 1's. Our proof will show that with high probability at any time $\log L \le t \le \exp( c L^{(1-\alpha)d})$ 
the density is close to $\rho$ (in the sense used to define the hydrodynamic limit) across most of the torus.

In many situations, e.g., the supercritical contact process on the $d$-dimensional cube \cite{TM2}, and power-law random graphs \cite{MMVY}, 
 the quasi-stationary distribution persists for time $\exp(\gamma N^d)$.
However, we think that is not true in Theorem \ref{expsurv}.  For a simpler situation where we can prove this, 
consider the 

\mn
{\bf Contact process with fast voting,} studied by Durrett, Liggett, and Zhang \cite{DLZ}. 
In this voter model perturbation, there are two states, 0 and 1. 

\begin{itemize}
  \item 
$h_{1,0}(x,\xi) \equiv 1$: particles die at rate 1. 
\item
$h_{0,1}(x,\xi) = \lambda f_1(x,\xi)$: a particle at $x$ gives birth to a new one at $x+y$ at rate $\lambda p(y)$.
\end{itemize}

We only have to keep track of 1's so the reaction term
\begin{align*}
\phi(u) \equiv \phi_1(u) & = \langle 1_{(\xi(0)=0)} \lambda f_1(x,\xi) - 1_{(\xi(0)=1)} \rangle_u  \\
& = \lambda p(0|v_1) u(1-u) - u
\end{align*}
If $\beta = \lambda p(0|v_1) > 1$ then 0 is an unstable equilibrium for $du/dt = \phi(u)$
and there is a fixed point at $\rho = (\beta-1)/\beta$.
The proof of Theorem \ref{expsurv} can be easily extended to show survival up to time $\exp(cL^{(1-\alpha)d})$ with high
probability. In this case we can prove a partial converse.

\begin{theorem} \label{cpdie}
There is an $C<\infty$ so that the system dies out by time
$\exp(C L^{d-2\alpha} \log L)$ with high probability.
\end{theorem}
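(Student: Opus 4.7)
The plan is to bound the extinction time by a large-deviation argument for the total count $N_1(t)$, organized into repeated Markov windows whose lengths exceed the local voter equilibration time on $\TL$.

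First I would compute the generator $\mathcal L$ applied to $N_1$ and $N_1^2$. Writing $E(\xi) = \sum_{x,y} p(y-x) \mathbf 1_{\xi(x)=0,\xi(y)=1}$ for the interface mass, a direct calculation using that the voter part is mean-preserving gives $\mathcal L N_1 = \epsilon^2(\lambda E(\xi) - N_1)$ and the carr\'e-du-champs $\mathcal L N_1^2 - 2N_1\mathcal L N_1 = 2 E(\xi) + O(\epsilon^2 N_1)$. On the accelerated time $\tau = \epsilon^2 t$, after a relaxation time of order $L^2$ (the random-walk mixing time on $\TL$), the fast voter part equilibrates local statistics, so duality forces $E(\xi) \approx p(0|v_1) \cdot N u(1-u)$; the density $u = N_1/N$ then behaves approximately like the It\^o diffusion $du = \phi(u)\,d\tau + \sqrt{2p(0|v_1)u(1-u)/(N\epsilon^2)}\,dW$, with drift $\phi(u) = \beta u(1-u) - u$ and large-deviation scale $N\epsilon^2 = L^{d-2\alpha}$.

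Next I would partition $[0, \exp(CL^{d-2\alpha}\log L)]$ into Markov windows of length $T^* = L^{2+\delta}$, and establish two facts per window. An exponential-martingale bound of Kipnis--Landim type, applied directly to the jump chain $N_1$, shows that the density is pushed from near $\rho$ down to $u \leq \eta$ (for a small fixed $\eta$) with probability at least $\exp(-c_1 L^{d-2\alpha})$, where the exponent is the Freidlin--Wentzell exit rate $2\int_\eta^\rho |\phi|/\sigma^2$ for the above SDE. Once $N_1 \leq \eta N$, a direct combinatorial argument on the integer chain $N_1$ --- using that the voter part runs at total rate $O(N)$ while the perturbation adds new $1$'s only at rate $O(\epsilon^2 N)$ --- shows $N_1$ reaches $0$ within a further time $C_2 \log L$ with probability at least $L^{-C_3}$; here the $\log L$ is the time needed for the remaining $1$'s to be absorbed via voter consensus on the residual near-empty configuration. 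Concatenating $M \asymp \exp(CL^{d-2\alpha}\log L)/T^*$ Markov windows and using the strong Markov property, the probability of never extinguishing is at most $(1 - L^{-C_3}\exp(-c_1 L^{d-2\alpha}))^M \to 0$ for $C$ sufficiently large.

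The main obstacle is the first per-window estimate. Because the density $u$ is not autonomous --- its evolution depends on the full configuration through $E(\xi)$ --- a bare SDE argument does not apply; the exponential martingale must be constructed on the particle system $N_1$ itself, absorbing errors from non-equilibrium configurations. I would handle this by conditioning on local blocks of side $\epsilon^{-1} = L^\alpha$ being close to quasi-equilibrium, which the hydrodynamic limit (Theorem \ref{hydro}) guarantees outside a set of atypical blocks that are rare by standard concentration estimates. The extra $\log L$ slack in the stated bound is precisely what is needed to absorb the error terms from these block-by-block approximations in the final concatenation.
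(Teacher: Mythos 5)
Your route is genuinely different from the paper's, and while your diffusion heuristic correctly identifies the scale $N\ep^2 = L^{d-2\alpha}$ in the exponent, two of your key steps do not go through as stated. First, the per-window estimate you need is a large-deviation \emph{lower} bound: you must show the density is driven from $\rho$ down to $\eta$ with probability at least $\exp(-c_1L^{d-2\alpha})$. Exponential martingales of Kipnis--Landim type give Chernoff-style \emph{upper} bounds on the probability of such excursions essentially for free; a lower bound requires an explicit change of measure (tilting the jump rates along a Freidlin--Wentzell path) together with a law of large numbers for the interface mass $E(\xi)$ \emph{under the tilted dynamics}, so that $E(\xi)\approx p(0|v_1)Nu(1-u)$ continues to hold while the system is being forced along an atypical trajectory. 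Your proposed fix --- conditioning on blocks being near voter quasi-equilibrium via the hydrodynamic limit --- controls only typical behavior and cannot certify the closure relation on the rare event itself. For a non-reversible, non-gradient perturbation with no explicit invariant measure this is the hard part of the problem, and the proposal does not supply it.

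Second, the endgame is quantitatively wrong. From $N_1=\eta N$ with $\eta$ a fixed constant, the drift of $N_1$ is upward (of order $\ep^2 N\eta(\beta-1)$, since $0$ is unstable for $\phi$), and the voter part is a mean-zero jump chain with total jump rate $O(N)$ and unit steps; in time $C_2\log L$ it can move $N_1$ by only $O(\sqrt{N\log L})\ll \eta N$. Reaching $0$ from $\eta N$ is itself a large deviation of cost $\exp(-c\eta L^{d-2\alpha})$ (the Freidlin--Wentzell integral $\int_0^\eta 2|\phi|/\sigma^2\,du$ is of order $\eta N\ep^2$), not $L^{-C_3}$, and it certainly does not occur in time $O(\log L)$. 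This does not destroy the order of the final exponent, but the ``combinatorial finish'' as written is false, and near $u=0$ the diffusion approximation itself breaks down, so the whole path to extinction needs particle-level control. For contrast, the paper avoids all of this by working entirely in the dual: starting a coalescing random walk from every site, the particle count decays like $N/(1+t)$, so the space-time volume available for branching up to time $\ep_L^{-2}$ is $O(N\ep_L^2\log L)=O(L^{d-2\alpha}\log L)$; with probability $\exp(-CL^{d-2\alpha}\log L)$ no branching occurs at all, and the intrinsic deaths then wipe out every dual particle, forcing extinction in one window --- the $\log L$ in the theorem is exactly the $\int dt/t$ from the coalescing-walk density, not an error-absorption term.
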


\noindent
Note that the powers of $L$ in the two results, $d(1-\alpha)$ and $d-2\alpha$, do not match. We suspect that the larger value is the
correct answer. However it is not clear how to improve the proof of Theorem \ref{expsurv} to close the gap.

\subsection{Regime 2. $L \ll \ep_L^{-1} \ll L^{d/2}$ or $N^{-2/d} \gg w \gg N^{-1}$.}

In this case the time scale for the perturbation to have an effect, $\ep_L^{-2}$ is much larger than the time $O(L^2)$
needed for a random walk to come to equilibrium, but much smaller than the time $O(L^d)$ it takes for two random walks to hit. Because of this, 
the particles in the dual will (except for times $O(L^2\log L)$ after the initial time or a branching event) will be approximately independent and uniformly distributed across the torus. Thus, if we speed up time  by $\ep_L^{-2}$ the fraction of sites on the torus in state $i$ will converge to an ordinary differential equation. To formulate a precise result define the empirical density by
$$
U_i(t) = \frac{1}{N} \sum_{x \in {\cal T}_L} 1\left( \xi^\ep_{t\ep^{-2}_L}(x) = i \right)
$$

\begin{theorem} \label{odeconv}
Suppose that $L^2 \ll \ep_L^{-2} \ll L^d$. If $U_i(0) \to u_i$ then $U_i(t)$ converges uniformly  on compact sets to $u_i(t)$, the solution of
$$
\frac{du_i}{dt} = \phi_i(u) \qquad u_i(0) = u_i
$$
where $\phi_i$ is the reaction term in \eqref{phidef}.
\end{theorem}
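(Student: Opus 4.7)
The plan is to combine a martingale decomposition with a duality-based local-equilibrium argument. View the process as a slow--fast system: voter flips happen at rate $1$ while perturbation flips happen at rate $\ep_L^2$; on the torus with $L^2 \ll \ep_L^{-2} \ll L^d$ the fast voter dynamics drives $\xi^\ep$ into local voter equilibrium at the current empirical density $U(s)$, while the slow perturbation produces the ODE drift.

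First I would set up the martingale decomposition. Applying the sped-up generator to $U_i$, the voter part contributes zero drift: summing over $x$, symmetry of $p$ gives $\sum_x f_i(x,\xi) = \sum_x 1(\xi(x)=i)$, so inflow balances outflow. Only the perturbation survives, yielding
\[
 U_i(t) = U_i(0) + \int_0^t \Psi_i\!\bigl(\xi^\ep_{s\ep_L^{-2}}\bigr)\, ds + M^\ep_i(t) + O(\ep_L^r),
\]
where
\[
 \Psi_i(\xi) = \frac{1}{N}\sum_x \sum_{j\ne i}\bigl[1(\xi(x)=j)\,h_{j,i}(x,\xi) - 1(\xi(x)=i)\,h_{i,j}(x,\xi)\bigr]
\]
and $M^\ep_i$ is a martingale. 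Each flip moves $U_i$ by $1/N$ and the total flip rate per unit of rescaled time is $O(N\ep_L^{-2})$, so $\langle M^\ep_i\rangle_t \le Ct\,\ep_L^{-2}/N = Ct/(N\ep_L^2) \to 0$ because $\ep_L^{-2}\ll L^d = N$. Hence $\sup_{t\le T}|M^\ep_i(t)|\to 0$ in probability.

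The heart of the proof is to show $\Psi_i(\xi^\ep_{s\ep_L^{-2}})\to\phi_i(U(s))$ in probability, uniformly in $s$ on compact intervals. Since each $h_{j,i}$ is finite-range, $\Psi_i$ is a spatial average of local observables, so it suffices to prove that the empirical $k$-point functions at any finite pattern of offsets lie within $o(1)$ of the $\ZZ^d$ voter equilibrium $\nu_{U(s)}$ appearing in \eqref{phidef}. Pick an intermediate backward time $\tau$ with $L^2\ll \tau \ll \ep_L^{-2}$ and apply the duality of Section~\ref{sec:dualvmp}, tracing the influence set back by $\tau$ from time $s\ep_L^{-2}$. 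The mean number of branching events in this window is $O(\ep_L^2\tau) = o(1)$, so with high probability the dual is a pure coalescing random walk during the window. On $\ZZ^d$ with $d\ge 3$, a pair of walks that separate beyond a fixed constant has positive probability never to recoalesce; since the dual mixes on the torus in time $L^2\ll\tau$ (Proposition~\ref{rw1}) and then fails to meet again in the remaining interval of length $\tau \ll L^d$ (the meeting-time estimate cited at the start of Section~\ref{sec:torus}), the coalescence pattern on the initial cluster matches the $\ZZ^d$ voter-equilibrium pattern up to $o(1)$. The surviving dual particles are then approximately uniform and independent on $\TL$, and reading their types from $\xi^\ep_{s\ep_L^{-2}-\tau}$, whose empirical density is within $O(\tau\ep_L^2) = o(1)$ of $U(s)$, gives approximately i.i.d.\ types with marginal $U(s)$. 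Combining reproduces $\nu_{U(s)}$, so $\Psi_i(\xi^\ep_{s\ep_L^{-2}})\to\phi_i(U(s))$ by the definition \eqref{phidef} of $\phi_i$.

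Finally, the two steps together give $U_i(t) = U_i(0) + \int_0^t \phi_i(U(s))\,ds + o(1)$ uniformly in $t$ on compact sets, and since $\phi_i$ is polynomial (hence Lipschitz on the simplex) a Gronwall comparison with the limiting ODE delivers $U_i(t)\to u_i(t)$ uniformly on compact sets. The main obstacle will be the local-equilibrium step: the uniform-in-$s$ control needed to pass the limit inside the time integral, and the bookkeeping around branching events where new dual particles are injected near an existing one. The assertion at the end of Section~\ref{sec:dualvmp} that ``the probability of a collision at a branching event will go to zero as $N\to\infty$'' is exactly what underwrites the chaos picture: within the window of length $\tau$ no branching occurs with high probability, and over the larger time scale the parent particle and the rest of the active dual are typically $\Theta(L)$ apart on the torus, so freshly created offspring do not collide with the extant particles and the independence in the local $k$-point analysis is preserved.
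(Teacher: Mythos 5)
Your proposal is correct in outline, but it organizes the proof differently from the paper. You use the Dynkin martingale decomposition of $U_i$ plus a ``replacement lemma'': the voter part of the generator contributes no drift to the empirical density, the martingale dies because $\ep_L^{-2}\ll N$, and the whole burden falls on showing that the perturbation drift $\Psi_i(\xi^\ep_{s\ep_L^{-2}})$ can be replaced by $\phi_i(U(s))$, which you do by running the dual backwards only over a short intermediate window $L^2\ll\tau\ll\ep_L^{-2}$ (no branching, local coalescence pattern matches $\ZZ^d$ since $\tau\ll L^d$, survivors mix to uniform since $\tau\gg L^2$, and types at independent uniform positions are exactly i.i.d.\ with marginal $U$ conditional on the configuration). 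The paper instead runs the full dual from time $t\ep_L^{-2}$ all the way back to $0$, inducts over the $O(1)$ branching events using Lemmas \ref{L1}--\ref{L3}, identifies the limiting branching structure, derives the ODE for $EU_i(t)$ by a one-event expansion of $u_i(t+h)-u_i(t)$, and controls $\var U_i(t)$ separately via non-intersection of duals from sites more than $L^{3/4}$ apart. Both routes rest on the same random-walk estimates (Proposition \ref{rw1} and its companion), so neither is more expensive technically; your version buys a cleaner passage to uniform convergence on compacts (Gronwall applied to the pathwise integral identity, with the uniform-in-$s$ worry dissolved because pointwise $L^1$ convergence of the integrand plus boundedness suffices under the time integral), while the paper's version makes the mechanism behind Theorem \ref{TFth} --- lineages with a single branching event --- more transparent. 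The one point to make explicit in your write-up is that the replacement must be done conditionally on $\mathcal{F}_{s\ep_L^{-2}-\tau}$ (so that the density you compare against is measurable with respect to the past) together with a spatial-averaging variance bound of the same kind the paper uses at the end of its proof; as sketched, your $k$-point argument gives the conditional expectation but not yet the concentration of the spatial average $\Psi_i$ around it.
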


Thus in Regime 2, we have ``mean-field'' behavior, but the reaction function in the ODE is computed using the voter model equilibrium, not the product measure that is typically used in heuristic calculations. The asymptotic behavior of the particle system is now the same as that of the limiting ODE.
In particular in case $S_2$, it will converge to 0 or 1, depending on whether the initial density $u_1 < \bar u$ or $u_1 > \bar u$. 

\subsection{Tarnita's formula}

Suppose each individual switches to a strategy chosen at random from the $n$ possible strategies at rate $\mu$.

\begin{theorem} \label{TFth}
Suppose that $N^{-2/d} \gg w \gg N^{-1}$. If $\mu \to 0$ and $\mu/w \to\infty$ slowly enough, then in
an $n$-strategy game strategy $k$ is favored by mutation if and only if
$$
\phi_k(1/n, \ldots, 1/n) > 0.
$$
\end{theorem}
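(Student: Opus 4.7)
The plan is to extend Theorem \ref{odeconv} to include a mutation term, analyze the equilibria of the resulting ODE as $\mu_L := \mu/w \to \infty$, and then transfer the conclusion back to the particle system.

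First I add mutation to the flip rates. Each site mutates at rate $\mu$, jumping to each strategy with probability $1/n$, so the flip rate from state $i$ to $j \neq i$ gains an additive constant $\mu/n$. On the accelerated time scale $t \mapsto t/\ep_L^2$ used in Theorem \ref{odeconv}, this becomes $\mu_L/n = \mu/(n\ep_L^2)$. Since $\mu \to 0$ the voter rate $c^v_{i,j}$ still dominates and the voter-model equilibrium computation of $\phi_i$ in \eqref{phidef} is unaffected; since $\mu_L \to \infty$, the mutation contribution survives in the ODE limit.

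Next I adapt the dual construction of Section \ref{sec:dualvmp} to incorporate mutation: attach to each dual particle an independent exponential clock of rate $\mu$ (on the original time scale), and when a clock rings remove that particle from the active set, recording that the type contributed at its position is an independent uniform sample from $\{1,\ldots,n\}$. Repeating the proof of Theorem \ref{odeconv} in this modified dual, as long as the expected inter-mutation time $1/\mu$ greatly exceeds the torus mixing time $O(L^2\log L)$---which is what ``slowly enough'' ensures---the active particles remain approximately independent and uniformly distributed on $\TL$ between successive branching and mutation events. The empirical densities $U(t)$ therefore converge on compact sets to the solution of
\beq
\frac{du_i}{dt} = \phi_i(u) + \mu_L\Bigl(\frac{1}{n} - u_i\Bigr).
\label{TFode}
\eeq

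I then analyze the equilibria of \eqref{TFode} as $\mu_L \to \infty$. Writing a fixed point as $u^*_i = 1/n + \delta_i$ and using smoothness of $\phi$, the equation $\phi_i(u^*) + \mu_L(1/n - u^*_i) = 0$ gives
$$
\delta_i = \frac{\phi_i(u^*)}{\mu_L} = \frac{\phi_i(1/n,\ldots,1/n)}{\mu_L} + O(\mu_L^{-2}).
$$
The implicit function theorem produces a unique equilibrium $u^*$ in a neighborhood of $(1/n,\ldots,1/n)$ for $\mu_L$ large, and its Jacobian $\nabla\phi - \mu_L I$ is strictly dissipative, so $u^*$ attracts a full neighborhood of the symmetric point. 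Combined with the ODE convergence of $U(t)$, this shows that the empirical frequency of strategy $k$ is eventually concentrated near $u^*_k$, which exceeds $1/n$ precisely when $\phi_k(1/n,\ldots,1/n) > 0$.

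The main obstacle is justifying the ODE limit when the rescaled mutation rate $\mu_L$ tends to infinity: many mutation events now occur per unit of rescaled time, and one has to check that between them the dual particles redistribute uniformly on $\TL$ and decouple from each other and from the branching history. This is exactly where the hypothesis ``$\mu/w \to \infty$ slowly enough'' is used, and it implicitly imposes an upper bound of the form $\mu \ll 1/(L^2 \log L)$ that keeps the mutation time scale longer than the random-walk mixing time on the torus, so that the dual analysis underlying Theorem \ref{odeconv} carries through.
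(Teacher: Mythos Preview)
Your proposal follows essentially the same route as the paper: augment the dual so that mutations kill particles, derive the limiting ODE $du_k/dt = \phi_k(u) + (\mu/w)(1/n - u_k)$, and expand the equilibrium about $(1/n,\ldots,1/n)$ to first order in $w/\mu$. Your additions---invoking the implicit function theorem for uniqueness of $u^*$, checking asymptotic stability via the Jacobian $\nabla\phi - \mu_L I$, and reading ``slowly enough'' as the requirement that $1/\mu$ dominate the torus mixing time so the dual argument behind Theorem~\ref{odeconv} still applies---are reasonable elaborations the paper leaves implicit; the paper's own proof is correspondingly brief and simply manipulates the equilibrium equation to get $|u_k - 1/n - (w/\mu)\phi_k(1/n,\ldots,1/n)| \le C(w/\mu)^2$ via Lipschitz continuity of $\phi_k$.
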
 

\noindent 
Note the similarity to Lemma \ref{phipos}. Intuitively,  the change from uniformity will be due to lineages that have one branching event. 
We do not claim that these conditions are necessary for the conclusion to hold but they are needed for out proof to work. 
Our next step is to show that we recover the formula in \cite{TWN} and identify the coefficients.

\mn
{\bf Birth-Death Dynamics.} 
In this case the limiting PDE is 
$\partial u_k/\partial_t = (1/2d) \Delta u + \phi_i^B(u)$ where
$$
\phi_k^B(u) =   p_1\phi_k^R(u) + p_2 \sum_{j}  u_k u_j (G_{k,k}-G_{j,k}+ G_{k,j}-G_{j,j}). 
$$
see \eqref{phiBi}.
If we take $u_i \equiv 1/n$ then 
\begin{align*}
p_1 \phi_k^R(1/n,\ldots 1/n) & = \frac{p_1}{n} \left( \sum_i G_{k,i} \frac{1}{n} - \sum_{i,j} \frac{1}{n} G_{i,j} \frac{1}{n} \right) \\
& = \frac{p_1}{n} (\hat G_{k,*} - \hat G)
\end{align*}
while the second term in \eqref{phiBi} is 
$$
\frac{p_2}{n}  (G_{k,k} - \hat G_{*,k} + \hat G_{k,*} - \hat G_{*,*})
$$
For Birth-Death dynamics \eqref{TFn} holds with $\alpha_1 = p(0|e_1|e_1+e_2)$ and $\alpha_2 = \alpha_3 = p(0|e_1,e_1+e_2)$.

\mn
{\bf Death-Birth Dynamics.} 
In this case the limiting PDE is $\partial u_k/\partial t = (1/2d) \Delta u + \phi_k^D(u)$ where
\begin{align}
\phi_k^D(u) &= \bar p_1 \phi_k^R(u)   + \bar p_2 \sum_{j}  u_k u_j (G_{k,k} -G_{j,k}+ G_{k,j} -G_{j,j}) 
\nonumber\\
& -(1/\kappa) p(v_1|v_2)  \sum_{j}  u_k u_j ( G_{k,j} - G_{j,k}).
\end{align}
see \eqref{phiDi}.
The computations for the first two terms are as in Birth-Death case with $p_i$ replaced by $\bar p_i$.
The third term is
$$
 -\frac{(1/\kappa) p(v_1|v_2)}{n} ( \hat G_{k,*} - \hat G_{*,k}).
$$
Thus for Death-Birth dynamics \eqref{TFn} holds with
$\alpha_1 = p(e_1|e_2|e_2+e_3)$, $\alpha_2 = p(e_1|e_2,e_2+e_3)$, 
$\alpha_3 = p(e_1|e_2,e_2+e_3) - (1/\kappa)p(e_1|e_2)$, where $1/\kappa = P(e_1+e_2=0)$ is the effective number of neighbors.

\medskip
These calculations for Theorem \ref{TFth} apply to graphs other than the torus. For example, a random $r$-regular graph looks locally like a tree
in which each vertex has $r$ neighbors. Of course the values of the constants for the random regular graph will be different from those
on the torus.  

\clearp

\section{Proof of Theorem \ref{expsurv}}

\begin{proof}
We will prove only the asymptotic lower bound on the number of 1's. Once this is done, the upper bound follows by interchanging the roles of 0's and 1's.
The reaction term $\phi(u) = \lambda u(u-\rho)(1-u)$ so the limiting PDE satisfies Assumption 1 in CDP with $u_* = u^* = \rho$.

\mn
There are constants $0< v_0 < \rho < v_1 < 1$ and $w,L_i>0$ so that

\mn
(i) if $u(0,x) \ge v_0$ when $|x| \le L_0$ then $\liminf_{t \to\infty} \inf_{|x| \le wt} u(t,x) \ge \rho$.

\mn
(ii) if $u(0,x) \le v_1$ when $|x| \le L_1$ then $\limsup_{t \to\infty} \sup_{|x| \le wt} u(t,x) \le \rho$.

\mn
In our case if $w$ is chosen small enough we can take the $v_i$ and $L_i$ to be any positive numbers.
See Aronson and Weinberger \cite{AW1,AW2}. We will take $v_0 = \min\{\rho/2, u_1/2\}$ where $u_1$
is the density of 1's in the initial product measure.

As in the proof of Theorem 1.4 in CDP on the infinite lattice, we use a block construction. 
We let $K$ be the largest odd integer so that we can fit $K^d$ adjacent cubes with  sides $ = 2\ep_L^{-1}$ into the torus.
Asymptotically we have $K \sim (C/2)L^{1-\alpha}$. 
Suppose that the origin is in the middle of one of the blocks and call that box $I_0$.
The other blocks can be indexed by  $\{-K/2, -K/2+1, \ldots K/2\}^d$. There is some space leftover outside
our blocks, so the block construction lattice is not a torus but a flat cube.

To achieve the PDE limit we scale space by multiplying by $\ep_L$ and speed up time by $\ep_L^{-2}$. 
To define our block event we consider the initial condition for the PDE in which $u(0,x) \ge v_0$ when $|x| \le 1/2$.
Given $\delta > 0$, we choose $T$ large enough so that $u(T,x) \ge \rho - \delta/2$ when $|x| \le 3$.   

As in the hydrodynamic limit, given $r\in(0,1)$, let $a_\ep = \lceil \ep^{r-1} \rceil \ep$, $Q_\ep = [0, a_\ep)^d$, and
$|Q_\ep|$ the number of points in $Q_\ep$. For $x \in a_\ep \ZZ^d$ and $\xi\in \Omega_\ep$ the space of all functions
from $\ep\ZZ^d$ to $S$ let
$$
D_i(x,\xi) = |\{ y \in Q_\ep : \xi(x+y) = i \}|/|Q_\ep|
$$ 
We say that the configuration in the box $I_k$ is good at time $t$ if in each small box $xa_\ep + Q_\ep$ contained
in $k+[-1/2,1/2]^d$ the density $D_i(x,\xi_t) \ge v_0$, and it is very good if in each small box $xa_\ep + Q_\ep$ contained
in $k+[-1,1]^d$ we have $D_i(x,\xi_t) \ge \rho-\delta$.  It follows from the hydrodynamic limit that 

\begin{lemma}
Let $\theta>0$. Suppose the configuration in $I_k$ is good at time $mT$. If $L$ is large enough then with probability $\ge 1-\theta/2$ 
all of the boxes $I_\ell$ with $\ell - k  \in \{-1,0,1\}^d$ are very good at time $(m+1)T$.
\end{lemma}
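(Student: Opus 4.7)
The plan is to work in rescaled coordinates and reduce the block event to a statement about the PDE \eqref{PDElimit}, which I will control using the parabolic comparison principle together with the Aronson--Weinberger traveling-wave estimates \cite{AW1, AW2} already cited in items (i)--(ii) above the lemma. After multiplying space by $\ep_L$ and time by $\ep_L^{-2}$, the block $I_k$ becomes the cube $k + [-1, 1]^d$ in $\RR^d$, the union $B = \bigcup_{\|\ell - k\|_\infty \leq 1} (\ell + [-1, 1]^d)$ is a compact set contained in $k + [-2, 2]^d$, and the hypothesis reads $D_1(x, \xi^\ep_{mT\ep_L^{-2}}) \geq v_0$ for every small box $xa_\ep + Q_\ep$ contained in $k + [-1/2, 1/2]^d$.

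Next I would fix once and for all a continuous PDE initial datum $w_* : \RR^d \to [0, 1]$ with $w_* = v_0$ on a slightly shrunken cube $k + [-1/2 + \eta, 1/2 - \eta]^d$ and $w_* \equiv 0$ outside $k + [-1/2, 1/2]^d$, and let $u_*(t, x)$ denote the solution of \eqref{PDElimit} with initial condition $w_*$. By choosing $\eta$ small and $T$ large, the Aronson--Weinberger estimate applied to the bistable cubic $\phi$ (case $S_1$) forces $u_*(T, x) \geq \rho - \delta/2$ on all of $B$, which is exactly the PDE-level version of the very-good condition.

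The link between the particle system and $u_*$ is a compactness-and-contradiction argument. If the lemma fails, for some $\theta > 0$ one can choose $L_n \to \infty$ and configurations $\xi^{(n)}$ at time $mT$ that are good in $I_k$ but whose time-$(m+1)T$ states fail to be very good on $B$ with probability at least $\theta/2$. After passing to a subsequence, the step-function densities $D_1(\cdot, \xi^{(n)})$ converge in $L^1$ on a large window $W \supset B$ to some measurable $w$ satisfying $w \geq v_0$ on $k + [-1/2, 1/2]^d$; mollifying, and taking $W$ large enough that the initial data outside $W$ cannot appreciably affect the PDE on $B$ by time $T$, lets me assume $w$ is continuous and $w \geq w_*$. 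Theorem \ref{hydro} applied along the subsequence identifies the limit of the final local densities on $B$ with the PDE solution $u^w(T, \cdot)$, and the parabolic comparison principle (valid since $\phi$ is Lipschitz) gives $u^w(T, \cdot) \geq u_*(T, \cdot) \geq \rho - \delta/2$ on $B$, contradicting the failure of the very-good condition.

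The main obstacle is that Theorem \ref{hydro} as stated controls only the one-point probabilities $u_i^\ep(t, x) = P(\xi^\ep_{t\ep^{-2}}(x) = i)$, whereas the argument above requires convergence in probability of the empirical densities $D_i(x, \cdot)$ themselves. For this I would rely on the second-moment estimate built into the hydrodynamic-limit proof in \cite{CDP}: two dual walks launched from sites in the same $a_\ep$-cube fail to coalesce up to time $\ep^{-2}$ with high probability, and the $O(1)$ branching events produced by the perturbation dual of Section \ref{sec:dualvmp} contribute a vanishing correction, so the variance of $D_i$ on each small box tends to zero uniformly over configurations in the good class. Once this variance bound is in hand, the contradiction argument of the previous paragraph closes the proof.
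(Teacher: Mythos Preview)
Your outline---reduce the block event to a PDE estimate via Theorem~\ref{hydro}, use Aronson--Weinberger to drive $u_*$ above $\rho-\delta/2$ on the enlarged cube, and finish with the scalar parabolic comparison principle---is exactly what the paper means by ``follows from the hydrodynamic limit,'' and your closing observation that a second-moment duality bound is needed to pass from one-point probabilities $P(\xi^\ep_{T\ep^{-2}}(x)=i)$ to the empirical densities $D_i$ is a genuine point the paper leaves implicit.

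The compactness-and-contradiction step, however, does not close. Extracting a subsequence of $D_1(\cdot,\xi^{(n)})$ gives at best weak-$*$ convergence in $L^\infty(W)$ to some measurable $w$, whereas the hypothesis ``$\lambda_\ep$ has asymptotic densities $w_i$'' in Theorem~\ref{hydro} asks, for deterministic initial data, for \emph{uniform} convergence of $D_1(\cdot,\xi^{(n)})$ to a \emph{continuous} $w$ on compacts. Mollifying the limit $w$ to a continuous $\tilde w$ does not help: the configurations $\xi^{(n)}$ still have densities near $w$, not near $\tilde w$, so Theorem~\ref{hydro} with initial datum $\tilde w$ says nothing about the process launched from $\xi^{(n)}$; and you cannot sidestep this by comparing $\xi^{(n)}$ to a configuration with density $w_*$ via particle-system monotonicity, since evolutionary games are not attractive in general. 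The route in \cite{CDP} (to which the paper is deferring) avoids compactness altogether: the dual-process proof of the hydrodynamic limit produces error terms that are \emph{uniform in the initial configuration}, so for each good $\xi_0$ one may take $w$ to be a smooth minorant of the \emph{actual} step-function density $D_1(\cdot,\xi_0)$ (the ``good'' hypothesis then forces $w\ge w_*$), apply the quantitative limit directly, and conclude with the PDE comparison, uniformly over all good $\xi_0$ and with no subsequence argument.
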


Determining whether that all of the boxes $I_k$ with $k \in \{-1,0,1\}^d$ are very good at time $T$ can be done by
running the dual processes from all of these points back to time 0. Bounding the dual by a branching random walk and then using 
exponential estimates it is easy to show:

\begin{lemma}
Given $\theta >0$ and $T$, there is an $C$ so that if $L$ is large enough with probability $\ge 1-\theta/2$
none of the duals starting in $[-3,3]^d$ escape from $[-3-CT,3+CT]^d$ by time $T$.
\end{lemma}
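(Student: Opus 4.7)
The plan is to stochastically dominate each dual by a branching random walk (BRW) and bound the BRW's radial spread by an exponential Chebyshev (Markov) estimate.

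Setup of the dominating BRW. From Section~\ref{sec:dualvmp}, the dual has two kinds of events: \emph{voter} events (each active particle performs an independent continuous-time random walk with step distribution $p$) and \emph{branching} events (each particle, at rate $\ep^2\|g^\ep_{i,j}\|$, emits $M$ new particles at displacements $Y^1,\ldots,Y^M$, with any offspring that lands on an existing active particle being discarded). In the rescaled coordinates of Theorem~\ref{hydro} (space contracted by $\ep_L$, time sped up by $\ep_L^{-2}$), each particle performs a random walk with jumps $\ep_L z$ at rate $\ep_L^{-2}$ (finite variance rate $\sigma^2=O(1)$) and branches at rate $O(1)$ per particle with offspring offsets of size $O(\ep_L)$. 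Dropping the coalescence and offspring-collision rules (which can only shrink the set of active descendants) dominates each dual starting at $x_0$ by a free BRW with particles $\widetilde X_i(t)$, $i\le N(t)$.

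Exponential tail. The BRW many-to-one formula (equivalently the exponential martingale) gives, for each unit vector $e\in\RR^d$ and $\theta\ge 0$,
\[
E\!\left[\sum_{i\le N(T)} e^{\theta\, e\cdot(\widetilde X_i(T)-x_0)}\right]
=\exp\!\big(T[(M-1)\beta + \mu_{\ep_L}(\theta e)]\big),
\]
where $\beta$ is the (rescaled) per-particle branching rate and $\mu_{\ep_L}(\theta e)=\ep_L^{-2}\big(E[e^{\theta\ep_L\, e\cdot z}]-1\big)$ is the cumulant-generating-function rate of the rescaled random walk, which by Assumption~\ref{pass} (forcing $E[z]=0$) converges to $\tfrac12\sigma^2\theta^2$ as $\ep_L\to 0$. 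Markov's inequality then yields
\[
P\!\Big(\max_{i\le N(T)}\,e\cdot(\widetilde X_i(T)-x_0)\ge CT\Big)
\le \exp\!\big(T[(M-1)\beta + \mu_{\ep_L}(\theta e)-\theta C]\big),
\]
and optimizing over $\theta>0$ produces a bound of the form $\exp(-T\psi(C))$ with $\psi(C)\to\infty$ as $C\to\infty$.

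Union bound, torus embedding, and conclusion. Union bounding over the $2d$ coordinate half-axes (to control escape in each signed coordinate direction) and over the starting sites of duals in $[-3,3]^d$ bounds the probability that any dual escapes $[-3-CT,3+CT]^d$. For $L$ large enough the box $[-3-CT,3+CT]^d$ fits inside the rescaled torus $[0,L\ep_L)^d$ (since $L\ep_L\to\infty$ in Regime~1), so the torus dynamics coincide with the $\RR^d$ dynamics inside this box and the BRW estimate applies verbatim. Taking $C=C(T,\theta)$ large enough to drive the union bound below $\theta/2$ then completes the proof. The main technical obstacle is the size $\sim \ep_L^{-d}$ of $[-3,3]^d\cap\ep_L\ZZ^d$, which a naive union bound contributes as an extra $d\log\ep_L^{-1}$ to the exponent; since the densities $D_i(\cdot,\xi_T)$ in the previous lemma are already local spatial averages, it suffices to track only a bounded family of representative base points per block, reducing the union bound to $O(1)$ terms and yielding $C=C(T,\theta)$ uniformly in $L$.
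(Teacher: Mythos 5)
Your strategy is precisely the one the paper gestures at: its entire ``proof'' of this lemma is the sentence ``Bounding the dual by a branching random walk and then using exponential estimates it is easy to show,'' and your domination of a single dual by a free BRW (dropping coalescence and the offspring-collision rule only enlarges the set of particles), followed by the many-to-one identity and an optimized Chernoff bound giving $\exp(-T\psi(C))$ with $\psi(C)\to\infty$, is a correct implementation of that for \emph{one} dual. (Minor quibble: the growth rate should be $M\beta$ rather than $(M-1)\beta$, since the parent survives a branching event; this changes nothing.)

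The gap is in your final step, which you have correctly located but not repaired. The good/very-good events are defined through the densities $D_i(x,\xi_{(m+1)T})$, which are averages over \emph{all} $|Q_\ep|$ sites of each of the roughly $a_\ep^{-d}$ small boxes per block, so determining them requires the dual of every one of the $\sim 6^d\ep_L^{-d}$ sites of $[-3,3]^d$; there is no ``bounded family of representative base points'' whose duals determine the event, and the union bound really is over $\ep_L^{-d}$ terms. With $C$ fixed this cannot be closed: the displacement $CT$ in rescaled units is $CT\ep_L^{-1}\ll T\ep_L^{-2}$ in microscopic units, so one is in the moderate-deviation regime where the single-walk escape probability is genuinely of order $e^{-cC^2T}$, a constant, and the maximum displacement over $\ep_L^{-d}$ essentially independent diffusive trajectories grows like $\sqrt{T\log(1/\ep_L)}$. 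Indeed, a second-moment argument applied to starting points near the face of $[-3,3]^d$ at mutual distance $\ep_L^{-1+\gamma}$ (whose duals meet with probability $O(\ep_L^{(1-\gamma)(d-2)})\to0$ and are otherwise independent) shows that for any fixed $C$ \emph{some} dual escapes with probability tending to one. So the statement in the literal form ``no dual escapes,'' with $C$ independent of $L$, is not provable by this (or any) route; what the block comparison actually requires, and what you should prove instead, is the weaker density version: with probability $\ge 1-\theta/2$ the \emph{fraction} of sites in each small box whose duals leave $[-3-CT,3+CT]^d$ is at most $\delta'$, so that the block event is measurable with respect to the graphical representation in that box up to an error absorbed into the threshold $\delta$. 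Your single-dual exponential estimate gives the expected escaping fraction $\le e^{-cC^2T}\ll\delta'$ for $C$ large, which is the correct starting point for that version; the reduction ``to $O(1)$ terms'' as you wrote it is not justified.
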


If the configuration in $I_k$ is good at time $mT$, all of the boxes $I_\ell$ with $\ell - k  \in \{-1,0,1\}^d$ are very good at time $(m+1)T$,
and none of the dual processes escape from $k + [-3-CT,3+CT]^d$ as we work backwards from time $(m+1)T$ to time we set $\eta(k,m)=1$.
If we fail to achieve any one of our goals we set $\eta(k,m)=0$. If the configuration in $I_k$ is not good at time $mT$ we define 
$\eta(k,m)$ by an independent Bernoulli that is 1 with probability $1-\theta$.

These variables $\eta(k,m)$ define for us an oriented site percolation process on the graph $(Z \bmod K)^d \times \{0,1,2,\ldots\}$
in which $(k,m)$ is connected to $(\ell,m+1)$
when  $\ell - k  \in \{-1,0,1\}^d$. Writing $z$ as shorthand for $(k,m)$  the collection of $\eta(z)$ is ``$M$
dependent with density at least $1-\theta$'' which means that for any $k$,
\begin{align} 
\label{Mdepdef}
P(&\eta(z_i) = 1|\eta(z_j),j\neq i) \ge (1-\theta),\\
\nonumber&\hbox{ whenever }z_i , 1\le
i\le k\hbox{ satisfy $|z_i-z_j|> M$ for all $i\ne j$.}
\end{align}

It is typically not difficult to prove results for $M$-dependent percolation processes with $\theta$ small (see
Chapter 4 of \cite{Dur95}), but here it will be useful to simplify things
by applying Theorem~1.3 of Liggett, Schonmann, and Stacy \cite{LSS} to reduce to the case of independent percolation.  
By that result, under \eqref{Mdepdef}, there is a constant
$\Delta$ depending on $d$ and $M$ such that if
\begin{equation}
\label{LSStheta'}
1-\theta' = \Bigr(1- \frac{\theta^{1/\Delta}}{(\Delta-1)^{(\Delta-1)/\Delta}}\Bigl) \bigl(1-(\theta(\Delta-1))^{1/\Delta}\bigr),
\end{equation}
we may couple $\eta(z)$ with a family
$\zeta(z)$ of iid Bernoulli random variables
with $P(\zeta(z)=1)=1-\theta'$ such that $\zeta(z)\le \eta(z)$ for all $z$. 

With this result in hand we can prove the long time survival of our process on the torus by using 

\begin{lemma}
Suppose that $\theta < \theta_0$. Start the oriented percolation on $[-K/2,K/2]^d$ with all sites occupied 
and let $\tau$ be the first time all sites are vacant. There is a constant $c$ so that as $K\to\infty$
\beq
P( \tau > \exp(cK^d) ) \to 1 
\label{explb}
\eeq
\end{lemma}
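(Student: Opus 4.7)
The plan is to invoke the standard Peierls/block-construction argument that establishes exponentially long extinction times for deeply supercritical oriented percolation on a finite cube.

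First I would apply the Liggett-Schonmann-Stacey coupling \eqref{LSStheta'}, reducing to the case of iid Bernoulli $\zeta(z)$ with $P(\zeta(z)=1) = 1-\theta'$. I would then choose $\theta_0$ small enough that the resulting $\theta'$ lies well below the critical density $p_c$ for oriented site percolation on $\ZZ^d$ with $\{-1,0,1\}^d$ connectivity; in particular, a standard Peierls contour count (see e.g.\ Chapter 4 of \cite{Dur95}) gives $\rho(1-\theta') \ge 1 - C_1 \theta'$ for the percolation probability, so that the upper invariant measure on $\ZZ^d$ has density close to $1$.

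Let $W_n \subset [-K/2, K/2]^d$ denote the wet set at time $n$ starting from $W_0 = [-K/2, K/2]^d$. The central step is to establish
\beq
P\bigl( |W_n| \ge (1 - 2C_1 \theta') K^d \bigr) \ge 1 - \exp(-c_1 K^d)
\label{densityest}
\eeq
for every $n \ge 0$, with some $c_1 > 0$. Coupling the finite-box process to the infinite-lattice process from the full initial condition yields $P(x \in W_n) \ge \rho(1-\theta')$ for any $x$ in the interior of the cube, so the expected density of $W_n$ exceeds $1 - 2C_1\theta'$ up to boundary effects of lower order in $K$. The concentration then follows from a bounded-differences (Azuma-Hoeffding) inequality applied to the martingale obtained by revealing the $\zeta(z,t)$ one time slice at a time; on the high-probability event that the wet set has been dense at every earlier time, altering a single $\zeta(z,t)$ changes $|W_n|$ by only a bounded amount, producing Gaussian fluctuations of order $O(K^{d/2}) \ll \theta' K^d$. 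A union bound over $n \le \exp((c_1/2) K^d)$ then forces $W_n \ne \emptyset$ throughout this window with probability tending to $1$, proving \eqref{explb} with $c = c_1/2$.

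The main obstacle is the bounded-differences constant in the concentration step, because a single $\zeta(z,t)$ can in principle have an unbounded descendant cone. The standard fix is to iterate the density estimate \eqref{densityest} inductively in $n$, conditioning at each step on the high-probability density event at all earlier times and using the linear speed-of-propagation bound to truncate the influence cone. A classical alternative is a direct contour/Peierls argument on the space-time dual: extinction by time $n$ would force a ``dry'' dual surface separating $W_0$ from the extinction slice, and any such surface must cover an entire spatial slice, giving area $\ge K^d$ and hence probability $\le \exp(-cK^d)$ by Peierls counting.
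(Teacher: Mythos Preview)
Your approach is different from the paper's: the paper does not give a self-contained argument but simply invokes Mountford's method from \cite{TM2}, observing that his key connectivity input (his Corollary~1.1, a uniform lower bound on the probability that a single seed in a thin slab $[1,n]\times[1,4L]$ reaches any prescribed target site at any time in a polynomial window) is already a statement about supercritical oriented site percolation and therefore applies directly to the $\zeta$-process here.

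Your density-maintenance route has a genuine gap at the first-moment step. The coupling between the finite-box process and the infinite-lattice process from full occupancy goes the \emph{wrong way}: restricting paths to $[-K/2,K/2]^d$ can only delete connections, so $W_n^{\text{box}}\subset W_n^{\ZZ^d}\cap[-K/2,K/2]^d$ and hence $P(x\in W_n^{\text{box}})\le P(x\in W_n^{\ZZ^d})$, not $\ge\rho$. The two processes coincide only for $x$ at distance $>n$ from the boundary, but you are running $n$ up to $\exp(cK^d)\gg K$, so no site is ``interior'' in the relevant sense and the claim that boundary effects are lower order in $K$ is false. Without this one-point lower bound there is nothing for Azuma to concentrate around, and the inductive conditioning you sketch becomes circular. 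Your Peierls alternative is the right instinct---extinction does force a closed space-time cut that projects onto the full cube, hence of cardinality at least $K^d$---but turning this into a bound requires enumerating minimal connected cuts of each size $k\ge K^d$ and showing the entropy is beaten by $(\theta')^k$; your one sentence does not do this. Either route can be completed, but neither is complete as written, which is presumably why the paper defers to \cite{TM2}.
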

 
Mountford \cite{TM2} proved for the contact process on $[1,N]^d$ that for all $\lambda>\lambda_c$
\beq
(\log E\tau )/N^d \to\gamma
\label{expgrow}
\eeq
Earlier he showed, see \cite{TM}
\beq
\tau /E\tau \Rightarrow {\cal E}
\label{explim}
\eeq
where ${\cal E}$ has an exponential distribution with mean 1.

\begin{proof}[Proof of \eqref{explb}]
We claim that our result can be proved by the method Mountford used 
to prove the sharp result in \eqref{expgrow} for all $\lambda > \lambda_c$. To explain why the reader should believe this,
we note that Lemma 1.1 in \cite{TM2} concerns connectivity properties of an oriented percolation process in which sites are open  
with probability $1-\ep_0$ which is then extended to the contact process with $\lambda > \lambda_c$ by using the
renormalization argument of Bezuidenhout and Grimmett \cite{BezGri}.

To be specific, Mountford, who only gives the details in $d=2$, shows that if $\lambda>0$ there are constants $L$ and $c>0$ 
so that for the contact process $\bar\xi^x_t$ in $D_n = [1,n] \times [1,4L]$  starting with a single occupied site at $x$ 
$$
\inf_{\lambda n^2 \le t \le n^8} \inf_{x,y\in D_n} P( \bar\xi^x_t(y) = 1 ) \ge c > 0
$$
See his Corollary 1.1. This result which is the key to the proof is also true for our oriented percolation process.
\end{proof}  

The last step in the proof of Theorem \ref{expsurv} is to note that if we start with product measure with a density $u_1$ of 1 then with high probability all
of the $I_m$ are good at time 0. To do this we note that the number of small boxes in the torus is a polynomial in $L$ but the probability
of an error in one is $\le \exp(-cL^{(1-r)d})$ where $r$ is the constant used to define the sizes of the little boxes.
\end{proof}

To justify the remark after Theorem \ref{expsurv} we use Proposition 1.2 of Mountford \cite{TM}. Write $\xi^1_t$ for the contact process in $[1,n]^2$
starting from all sites occupied. He shows that there are sequences $a(n), b(n) \to \infty$
with $b(n)/a(n) \to \infty$ so that $P^1(\tau < b(n) ) \to 0$ and 
$$
\inf_{\xi_0} P^{\xi_0} (\xi^1_{a(n)} = \xi_{a(n)} \hbox{ or } \tau < a_n ) \to 1
$$
In words the process either dies out before time $a(n)$ or at time $a(n)$ agrees with the process starting from all 1's. 
This idea, which is due to Durrett and Schonmann \cite{DurSch} allows one to prove the limit is exponential by showing that it has the
lack of memory property. Unfortunately,  Mountford writes $\sup$ rather than
$\inf$ in the conclusion. He cannot mean $\sup$ because that is attained by $\xi_0\equiv 1$ and the probability is 0. 

\clearp

\section{Proof of Theorem \ref{cpdie}} \label{sec:cpth}

\begin{proof}
Suppose  for simplicity that $\ep_L^{-1} \sim L^\alpha$.
Start a coalescing random walk $\bar\zeta^L_t$ with one particle at each site of the torus.

\begin{lemma}
Let $\bar N_L(t)$ be the number of particles in the coalescing random walks at time $t$. 
There is a constant $C_1$ so that for all $L$, 
$$
E\bar N_L(t) \le C_1/(1+t)\quad\hbox{for all $0\le t \le L^{2\alpha}$}.
$$
With high probability, i.e., one that tends to 1 as $L\to\infty$, 
$$
\bar N_L(t) \le 4C_1N/(1+t)\quad\hbox{for all $0\le t \le L^{2\alpha}$}.
$$
\end{lemma}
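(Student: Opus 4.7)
The proof naturally splits into the expectation bound (which for consistency with the second assertion should read $E\bar N_L(t) \le C_1 N/(1+t)$) and the high-probability upgrade. My plan is to couple to $\ZZ^d$ for the first moment and then apply Chebyshev on a dyadic time grid, using monotonicity to interpolate.

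For the first moment, I couple the torus coalescing walks to coalescing walks on $\ZZ^d$ by lifting each torus particle to an arbitrary preimage in $\ZZ^d$ and letting the lifts evolve as coalescing random walks on $\ZZ^d$. Projecting back to $\TL$ only creates additional coalescences, so $\bar N_L(t) \le \bar N^{\ZZ^d}_L(t)$, the number of surviving walks on $\ZZ^d$ starting from $[0,L)^d$. For $t \le L^{2\alpha} \ll L^2$ the walks have not spread far beyond their initial box, and $E\bar N^{\ZZ^d}_L(t) \le N \rho_{\ZZ^d}(t)$. For $d \ge 3$ the classical Bramson-Griffeath estimate gives $\rho_{\ZZ^d}(t) \le C/(1+t)$, which can be obtained from the differential inequality $\rho'(t) \le -c\rho(t)^2$ derived from the graphical representation, together with $\rho(0)=1$; the matching lower bound $\rho_{\ZZ^d}(t) \ge c/(1+t)$ holds by the same type of argument.

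For the high-probability bound, set $A_x(t) = 1(x \in S_L(t))$ where $S_L(t) \subset \TL$ is the occupied set, so that $\bar N_L(t) = \sum_x A_x(t)$. The key second-moment step is
\[
\mathrm{Var}(\bar N_L(t)) \le C \cdot E\bar N_L(t),
\]
which follows once we show that the occupation indicators are non-positively correlated, i.e., $P(x, y \in S_L(t)) \le \rho_L(t)^2$ for $x \ne y$. Chebyshev then yields $P(\bar N_L(t) > 2 E\bar N_L(t)) \le C/E\bar N_L(t)$. Discretize $[0, L^{2\alpha}]$ at dyadic times $t_k = 2^k$ for $0 \le k \le K = O(\log L)$. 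At each $t_k$ we have $E\bar N_L(t_k) \ge c N/L^{2\alpha} = c L^{d - 2\alpha} \to \infty$, since $2\alpha < d$. A union bound over the $O(\log L)$ scales yields failure probability $O(L^{2\alpha - d}\log L) \to 0$. Finally, monotonicity of $\bar N_L$ in $t$ extends control to intermediate times: for $t \in [t_k, t_{k+1}]$,
\[
\bar N_L(t) \le \bar N_L(t_k) \le 2 C_1 N/(1+t_k) \le 4 C_1 N/(1+t),
\]
using $t_{k+1} = 2 t_k$.

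The main obstacle is the variance bound, specifically the non-positive correlation of the occupation indicators. The intuition---a walker that ended at $x$ did not end at $y$---is clear, but making it rigorous requires controlling the joint two-point occupation probabilities $P(x, y \in S_L(t))$. One route is via the voter-model duality that identifies $\bar N_L(t)$ with the number of distinct ancestors in a voter model started from $\xi_0(z) = z$; another is a direct graphical-representation argument comparing coalesced versus non-coalesced pairs of walks on the relevant time scale $t \le L^{2\alpha}$, which is much shorter than the torus meeting time $\sim L^d$.
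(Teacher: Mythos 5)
Your skeleton is the same as the paper's: compare the torus system to coalescing random walks on $\ZZ^d$ and invoke Bramson--Griffeath for the first moment (you are right that the stated bound is missing a factor of $N$), then get the variance bound from negative correlation of the occupation indicators, apply Chebyshev on the dyadic grid $t_k=2^k$, and interpolate by monotonicity. Two steps, however, are not closed.

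First, the torus-to-$\ZZ^d$ comparison. Your ``lift and project'' coupling is asserted, not proved, and it is not immediate: the projection to $\TL$ of a coalescing system on $\ZZ^d$ is \emph{not} a coalescing system on the torus (two projected particles can occupy the same torus site without merging and then separate), so the inequality $\bar N_L(t)\le \bar N^{\ZZ^d}_L(t)$ requires a genuine monotonicity argument for ``delayed coalescence,'' which you do not supply. The paper sidesteps this entirely with a localization estimate: using the exponential moment bound $P(S_t>t^\rho)\le \exp(-t^{2\rho-1}/4\sigma^2)$, it shows that up to time $L^{2\alpha}$ no particle travels farther than $L^\beta$ for some $\beta\in(\alpha,1)$, except on an event of probability $N\cdot d\exp(-L^\delta/4\sigma^2)$; on the complement no wrap-around occurs, so the torus one-point function $p_L(t)$ equals the $\ZZ^d$ one-point function $p(t)$ up to that exponentially small error, and Bramson--Griffeath gives $p(t)\sim c/t$. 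This also matters for your Chebyshev step: you need the \emph{lower} bound $E\bar N_L(t_k)\ge cL^{d-2\alpha}$ on the torus, and a one-sided domination cannot deliver it, whereas the two-sided estimate $|p_L(t)-p(t)|\le N d\exp(-L^\delta/4\sigma^2)$ does. (Alternatively, center at $E\bar N_L(t)$ and take deviation $C_1N/(1+t)$, as the paper does; then only the upper bound on $E\bar N_L$ is needed.)

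Second, the negative correlation $P(x,y\in S_L(t))\le P(x\in S_L(t))P(y\in S_L(t))$, which you correctly identify as the crux of $\mathrm{Var}(\bar N_L(t))\le E\bar N_L(t)$ but explicitly leave open. This is not something to rederive: the paper simply cites Arratia (1981), Lemma 1 on p.~913, for exactly this inequality, and then transfers it to the torus on the time scale $t\le L^{2\alpha}$ via the same localization. You should do likewise rather than attempt the graphical-representation or duality arguments you sketch. With those two points repaired your proof matches the paper's.
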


\noindent
The constant $C_1$ is special but all the others $C$'s are not and will change from line to line.

\begin{proof}
Let $S_t$ be the first coordinate of our $d$-dimensional random walk that makes jumps according to $p$ at rate 1. 
$$
E\exp(\theta S_t) = \exp(t(\phi(\theta)-1)) \quad\hbox{where}\quad \phi(\theta) = \sum_{z} e^{\theta z_1} p(z)
$$
Since $z \to \exp(\theta z_1)$ is convex and $p$ has mean zero and finite range
$$
0 \le \phi(\theta) - 1  \sim \frac{\sigma^2 \theta^2}{2} \quad\hbox{as $\theta \to 0$}.
$$
Let $1/2 < \rho < 1$. If we take $\theta = t^{\rho-1}/2\sigma^2$ and $t$ is large we have 
$$
P( S_t > t^{\rho} ) \le \exp\left( - \theta t^{\rho} + \sigma^2\theta^2 t \right) \le \exp( - t^{2\rho-1}/4\sigma^2 ).
$$

Let $\beta\in(\alpha,1)$ so that $\rho=\beta/2\alpha < 1$, and let $\delta = 2\rho-1$. Using the last result on each of the $d$ coordinates.

\mn 
(*) With probability $\ge 1 - N \cdot d\exp(- L^{\delta}/4\sigma^2 )$ no particle that starts outside $[-2L^{\beta},2L^\beta]^d$ will enter $[-L^\beta,L^\beta]^d$
by time $L^{2\alpha}$ and no particle that starts inside $[-2L^\beta,2L^\beta]^d$ will exit $[-3L^\beta,3L^\beta]^d$. 

\mn
Let $\bar\zeta_t$ be the coalescing random walk on $\ZZ^d$, let $p(t) = P( x \in \bar\zeta_t)$.  
and let $p_L(t) = P( x \in \bar\zeta^L_t )$. The last two probabilities do not depend on $x$ by translation invariance.
(*) implies that
$$
|p_L(t) - p(t)| \le  N \cdot d\exp(- L^{\delta}/4\sigma^2 ) \quad\hbox{for all $t \le L^{2\alpha}$}. 
$$
The result for the expected value now follows from a result of Bramson and Griffeath \cite{BG} that shows $p(t) \sim c/t$ as $t\to\infty$. 

We begin by proving the second result for each fixed $t$. A result of Arratia \cite{Arr}, see Lemma 1 on page 913, shows 
$$
P( x \in \bar\zeta_t, y \in \bar\zeta_t ) \le P( x \in \bar\zeta_t) P(  y \in \bar\zeta_t ),
$$
so we have
$$
\var(\bar N^L_t) \le N p_L(t)(1-p_L(t)) \le E\bar N_L(t).
$$

Using Chebyshev's inequality, and $\bar N_L(t) \le C_1N/(1+t)$
$$
P( |\bar N_L(t) - E\bar N_L(t)| \ge C_1 N/(1+t) ) \le \frac{1+t}{C_1N} \le \frac{C}{L^{d-2\alpha}}.
$$
To complete the proof now let $M = [\log_2 L^{2\alpha}]$. Applying the last result to the $O(\log L)$ values
$s_i = 2^i$ with $1\le i\le M$ we see that 
$$
P( \bar N_L(s_i) \le 2C_1N/(1+t) \hbox{ for }1\le i \le M ) \ge 1- \frac{C \log L}{L^{d-2\alpha}}.
$$
Using the fact that $t \to \Bar N_L(t)$ is decreasing and  $s_{i+1}/s_i=2$ we have
$$
P( \bar N_L(t) \le 2C_1 N/(1+t/2) \hbox{ for }0 \le t \le L^{2\alpha} ) \ge 1- \frac{C \log L}{L^{d-2\alpha}},
$$
which proves the desired result. 
\end{proof}

The dual for the contact process with fast voting is a branching coalescing random walk. The maximum branching rate
is $\|h\|_\infty \ep_L^2$, so the expected number of branchings that occur on the space-time set covered by
particles in the coalescing random walk is
$$
\le 4C_1||h\|_\infty \ep_L^2 \int_0^{\ep_L^{-2}} N/(1+t) \, dt  \le C L^{d-2\alpha} \log L.
$$
Since the total number of branchings on the space-time set occupied by particles is Poisson, the probability that no branching occurs is
$$
\ge \exp( - C L^{d-2\alpha} \log L ).
$$
Since there are deaths in the graphical representation for the contact process, the rest is easy.
With probability $\ge \exp( - C L^{d-2\alpha} )$ all of  the particles at time $L^{2\alpha}/2$ will be hit 
by a death by time $L^{2\alpha}$. Thus even if the process is in the all 1's state at time 0,
the probability it dies out by time $L^{2\alpha}$ is at least 
$$
\ge \exp( - C_2 L^{d-2\alpha} \log L ) \equiv M.
$$
If we are given $M^2 = \exp( 2C_2  L^{d-2\alpha} \log L )$ trials the probability we always fail is
$$
\le \left( 1 - 1/M \right)^{M^2} \le \exp(-M).
$$
This completes the proof.
\end{proof}

We could try to prove the upper bound on the survival tiem by arguing that with probability $\ge \exp( - C L^{d-2\alpha} )$ all the particles at time $L^{2\alpha}$ 
land on a 0 in the configuration at time $t-L^{2\alpha}$. In the contact process we can argue this by noting that the state at time
$t-L^{2\alpha}$ is dominated by the process starting from all 1 at time $t-2L^{2\alpha}$. Since $\xi \equiv 1$ is absorbing in evolutionary
games, this simple argument is not possible. 

\clearp

\section{Proof of Theorem \ref{odeconv}} \label{sec:pf4}

We use the notation for the dual introduced in Section \ref{sec:dualvmp}.
Let $R_n$ denote the increasing subsequence of jump times $T_m$ that are branching times, and $N(t)$ be the number that occur by time $t/\ep^2$.
Since branching times occur at rate $C_Bk\ep^2$ when there are $k$ particles in the dual, and the number of particles in the dual
is bounded by a branching process in which each particle gives birth to $M$ new particles at rate $C_B\ep^2$, the expected number of
particles at time $t/\ep^2$ is $\le \exp(C_BMt)$ times the number at time 0. 

Choose a time $K_L$ so that $K_L/L^2 \to \infty$ and $\ep_L^2K_L \to 0$. If there are $k$ particles at time 0, then
the second condition implies 
$$
P(R_1 \le K_L ) \to 0
$$
as $L\to\infty$. From this it follows easily that 
\beq
P( R_m - R_{m-1} \le K_L \hbox{ for some $m \le N(t)$} ) \to 0
\label{space1}
\eeq
as $L\to\infty$ and
\beq
P( t/\ep^2 - R_{N(t)} \le K_L ) \to 0.
\label{space2}
\eeq
Let $S_n = R_n + L^2 + K_L$.

The next three results concern the time intervals $[R_n,R_n+L^2]$, $[R_n+L^2,S_n]$ and $[S_n,R_{n+1}]$.

\mn
In the next three lemmas we suppose that at time 0 there are $k$ particles in the dual and no two particles are
within distance $L^{3/4}$ of each other.

\begin{lemma} \label{L2}
Suppose the first particle encounters a birth event in the dual at time 0.
With high probability (i) there is no coalescence between the newborn particles or with their parent after time $L^{2}$
and before the next birth event,
(ii) up to time $L^2$ there is no coalescence between the new born particles 
and particles 2, $\ldots$ k, and (iii) at time $L^2$ all particles are separated by $L^{3/4}$.
\end{lemma}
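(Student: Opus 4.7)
My plan is to reduce each of (i)--(iii) to standard one- or two-particle random walk estimates on $\TL$. Between branching events the dual evolves as a coalescing random walk; by \eqref{space1}--\eqref{space2}, I may work under the high-probability event that the next branching occurs after time $K_L \gg L^2$ and before time $O(\ep_L^{-2})$. Within such a window, distinct dual particles perform independent continuous-time symmetric random walks on $\TL$ with jump kernel $p$, coalescing the instant any two meet.

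For (ii), each newborn sits within $O(1)$ of the parent, so it lies at distance at least $L^{3/4}/2$ from every particle $2,\dots,k$. Since $L^2 \ll L^d$, on the time window $[0,L^2]$ these walks are indistinguishable from walks on $\ZZ^d$, and in $d\ge 3$ the classical Green's function estimate gives that two independent symmetric random walks starting at distance $r$ ever meet with probability $O(r^{-(d-2)})$. Taking $r\asymp L^{3/4}$ and union-bounding over the $M(k-1)=O(1)$ pairs yields a failure probability of order $L^{-3(d-2)/4}=o(1)$.

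For (iii), I will invoke the single-particle mixing statement Proposition \ref{rw1}: each particle's marginal is within $o(1)$ in total variation of the uniform distribution on $\TL$ by time $L^2$. Two coalescing random walks that have not yet coalesced evolve as \emph{independent} walks (they are driven by disjoint families of Poisson arrows), and by (ii) together with the analogous transience statement for pairs of newborns at distance $O(1)$ in $d\ge 3$, the non-coalescence event on $[0,L^2]$ has probability bounded away from zero. A Radon--Nikodym argument then shows the joint law of the surviving particles at time $L^2$ is dominated by a constant multiple of the product of uniform laws; since a pair of uniform points on $\TL$ lies within distance $L^{3/4}$ with probability $O(L^{3d/4}/L^d)=O(L^{-d/4})$, a union bound over the $O(1)$ pairs completes the claim.

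Finally, (i) follows by combining (iii) with the Regime 2 assumption: at time $L^2$ the surviving newborn and parent particles are $L^{3/4}$-separated, and by \eqref{space2} the next branching $R_2$ occurs before time $C\ep_L^{-2}$ with high probability. On this mixed scale, the meeting rate of a well-separated pair on $\TL$ in $d\ge 3$ is $O(1/N)$, so the probability of any coalescence in $[L^2,R_2]$ is $O(\ep_L^{-2}/L^d)=O(1/(wN))=o(1)$, using $w\gg N^{-1}$. The main obstacle I foresee is the conditioning step in (iii): justifying the Radon--Nikodym bound requires a uniform lower bound on the probability that the newborn/parent cluster evolves without internal coalescence during $[0,L^2]$, which must be extracted from the transience of the difference walk in $d\ge 3$ together with quantitative control of its Green's function on the finite torus $\TL$.
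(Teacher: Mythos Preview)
Your arguments for (ii) and (i) are essentially the paper's: for (ii) the paper invokes its second Proposition part (a) (hitting probability from distance $r_L\to\infty$ within time $t_L\ll L^d$ tends to $0$), and for (i) it says explicitly ``combine (iii) of Lemma \ref{L2} with (i) of Lemma \ref{L1}'', which is your separation-plus-hitting bound on $[L^2,R_2]$.

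Where you diverge is (iii). The paper does \emph{not} use mixing in total variation, Radon--Nikodym domination, or any conditioning on non-coalescence. It simply applies the uniform local bound of Proposition \ref{rw1}(a), $P(Z^L_t=x)\le C(L^{-d}\vee t^{-d/2})$, to the \emph{difference} of each pair of (independent, pre-coalescence) walks: at time $L^2$ this gives $P(|\text{difference}|\le L^{3/4})\le C L^{3d/4}\cdot L^{-d}=O(L^{-d/4})$, uniformly in the starting separation. This is the content of the paper's second Proposition part (b). Since in the graphical construction the surviving coalescing particles at time $L^2$ are always a subset of the positions of the independent walks (after a coalescence the lower-indexed particle continues on its own Poisson arrows), separation of every independent pair implies separation of every surviving pair. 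So the ``main obstacle'' you flag is self-inflicted: the Radon--Nikodym step can be dropped entirely.

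One further correction: your claim that each particle is within $o(1)$ in total variation of uniform \emph{by time $L^2$} overshoots what Proposition \ref{rw1} gives. Part (b) there needs time $\ge L^2 s_L$ with $s_L\to\infty$; at time $L^2$ only the pointwise density bound of part (a) is available, and that (not TV mixing) is what drives the paper's proof of (iii).
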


\begin{lemma} \label{L3} 
Even if we condition on the starting locations, then at time $K_L$
the particle locations at time $K_L$ are almost independent and uniformly distributed over the torus,
i.e., the total variation distance between the random positions and independent uniformly distributed positions tends to 0.
\end{lemma}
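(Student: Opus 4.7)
The plan is to couple the dual's $k$ coalescing random walks with $k$ continuous-time \emph{independent} random walks on $\TL$ that share the same Poisson jump clocks. Before any two of the independent walks first collide, the $k$-tuples of positions under the two constructions agree exactly, so
$$
d_{TV}\bigl(\text{coalescing positions at }K_L,\ \text{independent positions at }K_L\bigr) \le P(\text{some pair of independent walks meets by time }K_L).
$$
The proof then reduces to two estimates: (a) the collision probability on the right is $o(1)$, and (b) the joint law of the $k$ independent walks at time $K_L$ is within $o(1)$ in total variation of the product of $k$ uniform measures on $\TL$.

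For (b) I would invoke Proposition \ref{rw1}, which says a single random walk on $\TL$ mixes in $O(L^2)$ steps. Since $K_L/L^2 \to \infty$, the TV distance from uniform of each marginal is $o(1)$ uniformly in the starting position, and the independence built into the coupling upgrades this to a bound of $k \cdot o(1) = o(1)$ on the joint TV distance from the uniform product (with $k$ bounded by a constant, since the dual's branching history up to this point has produced only finitely many particles).

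For (a), fix a single pair of walks; their difference $D_t$ is a rate-$2$ random walk on $\TL$ starting at a point of $L^\infty$ norm at least $L^{3/4}$, and collision is the event $\{D_t = 0\text{ for some }t \le K_L\}$. I would split $[0,K_L]$ at $L^2$. On $[0,L^2]$, with probability $1-o(1)$ the walk does not wrap around the torus, so it can be coupled with a walk on $\ZZ^d$; transience of the latter for $d \ge 3$ gives hitting probability $O(r^{-(d-2)}) = O(L^{-3(d-2)/4}) = o(1)$ from the initial distance $r = L^{3/4}$. On $[L^2, K_L]$ the walk has mixed, so its occupation density at $0$ is $O(L^{-d})$, and a Green's-function estimate (hitting probability equals expected occupation time at $0$ divided by the expected number of returns, which is $\Theta(1)$ for $d \ge 3$) yields hitting probability $O(K_L/L^d) = o(1)$ by the assumption $K_L \ll \ep_L^{-2} \ll L^d$. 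A union bound over the $\binom{k}{2}$ pairs, with $k$ deterministically bounded, concludes (a).

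The main obstacle is the hitting estimate on the post-mixing interval $[L^2, K_L]$ in (a). The pre-mixing part is a routine application of transient potential theory on $\ZZ^d$ once the coupling with a free walk is made precise, and (b) is a direct application of Proposition \ref{rw1}; but controlling the hitting probability once the walk has mixed requires a Green's-function bound on $\TL$ that interpolates cleanly between the local $|x|^{-(d-2)}$ behavior and the global $L^{-d}$ density, uniformly in the starting distance. Once that is in hand, the conclusion follows by combining (a) and (b) with the coupling inequality.
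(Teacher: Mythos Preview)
Your argument is correct. The paper's own proof of Lemma~\ref{L3} is the single remark ``(b) implies Lemma~\ref{L3}'' after Proposition~\ref{rw1}: between branchings the dual particles move according to independent arrow families at distinct sites, so they are genuine independent random walks until first collision, and Proposition~\ref{rw1}(b) applied coordinatewise gives the product-uniform limit --- this is exactly your part~(b). Your part~(a), the collision bound, is not assigned to this lemma in the paper; that work is delegated to Lemma~\ref{L1}(i), proved via part~(a) of the second proposition in Section~\ref{sec:pf4}. That proposition uses the same split-and-integrate scheme you sketch, though it splits at a scale $t'_L$ with $t'_L\to\infty$ and $t'_L \ll r_L\wedge t_L$, bounds the early interval by a maximal/displacement inequality rather than by coupling with a transient walk on $\ZZ^d$, and handles the late interval by integrating the transition-probability bound from Proposition~\ref{rw1}(a) rather than via a Green's-function identity. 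So you have recovered the paper's argument with a slightly different allocation of labor between the lemmas.
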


\begin{lemma} \label{L1}
With high probability, (i) there is no coalescence before the first time a birth event affects a particle,
and (ii) just before the first birth time the existing particles are separated by $L^{3/4}$.
\end{lemma}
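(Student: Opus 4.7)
The plan is to exploit the independence of the branching clock from the spatial motion. Let $R_1$ denote the first branching time. Because branchings at each active particle form an independent Poisson process of rate $C_B \ep_L^2$, the time $R_1$ is exponential with parameter $C_B k \ep_L^2$ and is independent of the coalescing random walks. Hence on $[0, R_1)$ the dual particles evolve as $k$ independent continuous-time random walks on $\TL$ stopped at their first meeting. Since $\ep_L^2 K_L \to 0$ and $\ep_L^{-2} \ll L^d$, the event $K_L \le R_1 \le \ep_L^{-2} \log L$ has probability tending to $1$, so it suffices to work on that window.

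For (i), I estimate the meeting probability pairwise. For $i \neq j$, the difference $D_t = X_i(t) - X_j(t)$ is a random walk on $\TL$ at rate $2$ starting from some $x$ with $\|x\| \ge L^{3/4}$, and the pair avoids coalescence on $[0, R_1]$ precisely when $D_t$ avoids $0$ there. A standard Green function bound on $\TL$ in $d \ge 3$ gives, for $T \ge L^2$,
$$
P_x(\tau_0 \le T) \;\le\; C \|x\|^{-(d-2)} + C T/L^d ,
$$
the first term controlled by the $\ZZ^d$ Green function (pre-mixing contribution) and the second by the near-stationary return rate $1/L^d$ to $0$. With $\|x\| \ge L^{3/4}$ and $T = \ep_L^{-2} \log L$, both terms are $o(1)$, using $d \ge 3$ and $\ep_L^{-2} \ll L^d$; a union bound over the $O(k^2)$ pairs proves (i).

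For (ii), apply Lemma \ref{L3} at the deterministic time $K_L$: irrespective of the starting configuration, the joint law of the positions at time $K_L$ is within $o(1)$ in total variation of $k$ independent uniform samples on $\TL$. On the event in (i), the particles continue as $k$ independent random walks throughout $[K_L, R_1]$, and the uniform distribution is stationary for such walks, so their joint law just before $R_1$ remains within $o(1)$ in total variation of iid uniforms on $\TL$. For iid uniforms the probability that a fixed pair lies within distance $L^{3/4}$ is at most $C L^{3d/4}/L^d = C L^{-d/4}$, and a union bound over the $O(k^2)$ pairs yields (ii).

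The main obstacle is the Green function estimate used in (i): on $\ZZ^d$ it follows directly from transience, but on $\TL$ it requires splitting $G_T(x,0)$ into pre-mixing and post-mixing contributions, a decomposition that is standard for random walks on the torus in $d \ge 3$ but needs to be spelled out carefully (compare the meeting-time argument in \cite{Cox89}). Everything else reduces to this pointwise estimate and to Lemma \ref{L3}, whose hypotheses are satisfied at the deterministic time $K_L$ because $K_L/L^2 \to \infty$.
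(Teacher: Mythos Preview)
Your argument is correct and, for part~(i), matches the paper's approach closely: the paper packages the hitting estimate as a proposition showing that if $r_L\to\infty$ and $t_L\ll L^d$ then $\max_{|x|\ge r_L}P_x(\tau_0\le t_L)\to 0$, proved by splitting time at an intermediate scale $t'_L$ and combining a martingale maximal inequality on $[0,t'_L]$ with the transition bound $P(Z^L_t=0)\le C(L^{-d}\vee t^{-d/2})$ on $[t'_L,t_L]$. Your Green-function decomposition $C\|x\|^{-(d-2)}+CT/L^d$ is an equivalent way to organize the same two contributions. One small caveat: the upper window $T=\ep_L^{-2}\log L$ need not satisfy $T\ll L^d$ under the bare hypothesis $\ep_L^{-2}\ll L^d$; choose instead any $s_L\to\infty$ with $\ep_L^{-2}s_L\ll L^d$ (or simply note that in the application the dual runs only to time $t\ep_L^{-2}\ll L^d$).

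For part~(ii) you take a slightly different route. The paper does not invoke Lemma~\ref{L3} here; it uses the pointwise bound $P(Z^L_t=x)\le CL^{-d}$ for $t\ge L^2$ directly on the pairwise difference walk, giving $P(|X_i-X_j|\le L^{3/4})\le CL^{3d/4-d}$ at any time $\ge L^2$, and hence at $R_1^-$. Your detour through Lemma~\ref{L3} plus stationarity of the uniform law is also valid (and uses that $R_1$ is independent of the walk paths), but the paper's argument is shorter and avoids any discussion of how conditioning on the no-coalescence event in~(i) interacts with the total-variation approximation.
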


\begin{proof}[Proof of Theorem \ref{odeconv}]
We will now argue that the three lemmas imply the desired conclusion. 
\eqref{space2} implies that with high probability there is no branching event in $[t/\ep^2-K_L,t/\ep^2].$
Let $N(t)$ be the last branching time before time $t/\ep^2$. By the last remark $S_{N(t)} < t/\ep^2$.
Lemma \ref{L1} implies that no coalescence occurs in the dual in $[S_{N(t)},t/\ep^2]$. 

When \eqref{space1} holds, Lemma \ref{L1} implies that the particles at time $R_{N(t)}$ are all separated by $L^{3/4}$ so using 
Lemma \ref{L2} all of the coalescences between the new born particles and their parent occur
before $R_{N(t)}+L^2$ and there is no coalescence with other particles during that time interval.
At time $R_N+L^2$ all the particles are separated by $L^{3/4}$, so Lemmas \ref{L3} and \ref{L1}
imply that there is no coalescence during $[R_{N(t)}+L^2,S_{N(t)}]$ and at time $S_{N(t)}$ the
particles are almost uniform over the torus.  

The results in the last paragraph imply that when the jump occurs at time $R_{N(t)}$ the joint distribution of the focal site and
its neighbors are approximately that of the voter model with the density at time $S_{N(t)}$. Since $\ep_L^2K_L \to 0$
this is almost the same as the density at time $R_{N(t)}$. Working backwards in time and using induction we see that
as $L\to\infty$ the dual on the time scale $t\ep_L^{-2}$ the dual converges to a limit in which branchings occur 
at rate $\|h\|_\infty$ and when they occur the joint distribution of the state of the focal site and its neighbors is given by the
density at time $t$.

The last observation implies $EU_i(t)$ converges to a limit $u_i(t)$. To show that the limit satisfies the differential equation
we consider $u_i(t+h)-u_i(t)$. When $h$ is small the the probability of two or more branching events is $o(h^2)$.
By considering the effect of a single event and letting $h\to 0$ we conclude that $du_i/dt = \phi_i(u)$.
For more details in the more complicated situation of convergence to a PDE, see Section 2 in Durrett and Neuhauser \cite{DN}
or Chapter 2 of \cite{CDP}.

The final detail is to show that $\var U_i(t)$. To do this, we note that if $x$ and $y$ are separated by
a distance greater than $L^{3/4}$ then with high probability their dual processes never intersect before time $t/\ep^2$.
Writing $\eta_t(x)$ as shorthand for $\xi^\ep_{t\ep^{-2}_L}(x)$, we see that if $\delta>0$ and $L$ is large
\begin{align*}
\var(U_i(t)) & = \frac{1}{N^2} \sum_{x \in \TL} \cov( \eta_t(x), \eta_t(y)) \\
& \le \frac{c_d L^{3d/4}}{L^d} + \delta
\end{align*}
Letting $L\to\infty$ and using Chebyshev's inequality gives the desired result.
\end{proof}

\subsection{Proofs of the three lemmas}

Let $p(x)$ be a finite range, irreducible symmetric random
walk kernel on $\ZZ^d$, some $d\ge 3$, with characteristic
function
\[
\phi(\theta) = \sum_{x\in\ZZ^d}e^{i\theta x}p(x), \quad
\theta\in [-\pi,\pi]^d
\]
Then $\phi$ is real-valued, and by PII.5 in Spitzer there is
a constant $\lambda>0$ such that
\begin{equation}\label{spitzer}
1-\phi(\theta)\ge \lambda |\theta|^2, \quad
\theta\in [-\pi,\pi]^d
\end{equation}

Let $Z_t$ 
be a rate-one continuous time walk on $\ZZ^d$ with
jump kernel $p$, starting at 0. 
Then $Z_t$ has characteristic function
\[
\phi_t(\theta) = E_0(e^{i\theta Z_t})=
\exp(-t(1-\phi(\theta)) ,
\]
and by (1), 
\begin{equation}\label{phitbnd}
\phi_t(\theta) \le 
e^{-\lambda t|\theta|^2}, \quad \theta\in [-\pi,\pi]^d
\end{equation}
Let $Z^L_t=Z_t\mod L$ be the corresponding walk
on the torus $\TL$. 

\begin{prop} \label{rw1}
(a) There is a constant $C>0$ such that
\begin{equation}\label{tLbnd}
P(Z^L_t=x) \le C\Big(L^{-d}\vee t^{-d/2}\Big), \quad t\ge 0, x\in\TL.
\end{equation}

(b) If $s_L\to\infty$ then
\begin{equation}\label{tvbnd}
\sup_{t\ge L^2s_L}\max_{x\in\TL}L^d|P(Z^L_t=x)-L^{-d}| = 0
\end{equation}
\end{prop}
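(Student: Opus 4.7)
The plan is to use Fourier inversion on the torus. Writing $\Theta_L = \{2\pi k/L : k \in \ZZ^d \cap (-L/2, L/2]^d\}$, the characters of $\TL$ are $x \mapsto e^{-i\theta \cdot x}$ for $\theta \in \Theta_L$, and $Z^L_t$ has the representation
\begin{equation*}
P(Z^L_t = x) = \frac{1}{L^d}\sum_{\theta \in \Theta_L} e^{-i\theta \cdot x}\phi_t(\theta).
\end{equation*}
Combining with \eqref{phitbnd} gives
\begin{equation*}
P(Z^L_t = x) \le \frac{1}{L^d}\sum_{\theta \in \Theta_L} e^{-\lambda t|\theta|^2}.
\end{equation*}
Parameterizing $\theta = 2\pi k/L$ with $k$ ranging over a fundamental domain of $\ZZ^d/L\ZZ^d$, this is $L^{-d}\sum_k e^{-\alpha|k|^2}$ with $\alpha = 4\pi^2\lambda t/L^2$.

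For part (a), I would split according to the size of $\alpha$. When $\alpha \le 1$ (i.e.\ $t \lesssim L^2$), I compare the full lattice sum to the Gaussian integral $\int_{\RR^d} e^{-\alpha|y|^2}\, dy = (\pi/\alpha)^{d/2}$; a standard Riemann-sum comparison (each cell $k + [-1/2,1/2]^d$ is dominated by a slightly enlarged Gaussian) gives $\sum_{k \in \ZZ^d} e^{-\alpha|k|^2} \le C \alpha^{-d/2}$, so
\begin{equation*}
\frac{1}{L^d}\sum_k e^{-\alpha|k|^2} \le \frac{C}{L^d} \left(\frac{L^2}{t}\right)^{d/2} = \frac{C'}{t^{d/2}}.
\end{equation*}
When $\alpha \ge 1$, I would use the factorization $\sum_{k \in \ZZ^d} e^{-\alpha|k|^2} = \bigl(\sum_{n \in \ZZ} e^{-\alpha n^2}\bigr)^d$ together with $\sum_{n \ne 0} e^{-\alpha n^2} \le 2e^{-\alpha}/(1 - e^{-\alpha})$ to conclude $\sum_k e^{-\alpha|k|^2} \le (1 + Ce^{-\alpha})^d$, which is bounded. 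Dividing by $L^d$ yields $O(L^{-d})$. Since $t \gtrsim L^2$ in this regime, $L^{-d} \gtrsim t^{-d/2}$, so both cases deliver the bound $C(L^{-d} \vee t^{-d/2})$.

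For part (b), note that the constant function $L^{-d}$ is exactly the $\theta = 0$ contribution to the Fourier expansion (since $\phi_t(0) = 1$), so
\begin{equation*}
L^d \,|P(Z^L_t = x) - L^{-d}| \le \sum_{\theta \in \Theta_L \setminus \{0\}} e^{-\lambda t|\theta|^2} = \sum_{k \ne 0} e^{-\alpha |k|^2}.
\end{equation*}
When $t \ge L^2 s_L$, we have $\alpha \ge 4\pi^2 \lambda s_L \to \infty$, so the factorization argument from the case $\alpha \ge 1$ of part (a) gives $\sum_{k \ne 0} e^{-\alpha|k|^2} = O(d\,e^{-\alpha}) \to 0$ uniformly in $x$ and in $t \ge L^2 s_L$, which is the desired conclusion.

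The main technical obstacle is the Riemann-sum bound $\sum_k e^{-\alpha|k|^2} \le C\alpha^{-d/2}$ in the regime $\alpha \le 1$; the constant $C$ must be controlled uniformly in $\alpha$ (and thus in $L$ and $t$), but this follows by the cell-by-cell comparison with a Gaussian on $\RR^d$ and is routine. Everything else is a straightforward Fourier computation that exploits the spectral gap $|\theta| \ge 2\pi/L$ on the dual torus together with the Spitzer-type lower bound \eqref{spitzer}.
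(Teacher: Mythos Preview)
Your proof is correct and follows essentially the same route as the paper: Fourier inversion on $\TL$, the Spitzer lower bound \eqref{spitzer} to control $\phi_t$, and then estimation of the resulting lattice Gaussian sum. The only cosmetic difference is that the paper does not split on the size of $\alpha$; it bounds the $k\neq 0$ sum by a single radial integral to obtain $|P(Z^L_t=x)-L^{-d}|\le C t^{-d/2}$ in one stroke, which yields (a) directly and, after substituting $t\ge L^2 s_L$, gives (b) with polynomial decay $s_L^{-d/2}$ rather than your sharper exponential rate.
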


\noindent
(b) implies Lemma \ref{L3}.

\begin{proof}
By a standard inversion formula,
\[
P(Z^L_t=x) = L^{-d}\sum_{y\in\TL}\phi_t(2\pi y/L)
e^{2\pi i xy/L}.
\]
Pulling out the $y=0$ term and using \eqref{phitbnd} gives
\begin{equation}\label{bnd0}
|P(Z^L_t=x) -L^{-d}|\le L^{-d}\sum_{y\in\TL, y\ne 0 }
e^{-\lambda t|2\pi y|^2/L^2}
\end{equation}
After bounding the sum by an integral and then changing
variables, there is a constant $C'>0$ such that 
\begin{equation}\label{intbnd}
\sum_{y\in\TL, y\ne 0 }
e^{-\lambda t|2\pi y|^2/L^2}  \le
C'\int_1^{L} 
e^{-\lambda t(2\pi r/L)^2} r^{d-1} dr
\le C' \Big(\frac{L}{2\pi\sqrt{\lambda t}}\Big)^d
\int_0^\infty u^{d-1}e^{-u^2} du.
\end{equation}
Since $C_d=\int_0^\infty r^{d-1}e^{-r}dr<\infty$,
\eqref{bnd0} and \eqref{intbnd} imply
\[
P(Z^L_t=x) \le L^{-d} + 
C_d C'
(2\pi{\lambda})^{-d}\ t^{-d/2}
\]
which proves part (a). 

For part (b),  plug any $t\ge L^2s_L$ into \eqref{bnd0}
and \eqref{intbnd} to get
\[
L^{d}|P(Z^x_{t}=x) -L^{-d}| \le
C_dC')(2\pi\sqrt{\lambda})^{-d}
s_L^{-d/2}.
\]
Since $s_L\to\infty$ this proves part (b).
\end{proof}

\begin{prop} (a) If $1\ll r_L\le L/2$ and $t_L\ll L^d$, then
\[
\max_{x\in\TL, |x|\ge r_L}P(Z^{L,x}_s=0\text{ for some }s\le
t_L) \to 0
\]

(b) If $r_L\ll L$  then
\[
\sup_{t \ge L^2} P(|Z^{L,0}_{t}|\le r_L)\to 0
\]
\end{prop}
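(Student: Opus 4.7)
Part (b) is immediate from Proposition~\ref{rw1}(a). For any $t\ge L^2$ we have $t^{-d/2}\le L^{-d}$, so the proposition yields $P(Z^{L,0}_t=x)\le CL^{-d}$ uniformly in $x\in\TL$. Summing over the $O(r_L^d)$ lattice points with $|x|\le r_L$ gives $P(|Z^{L,0}_t|\le r_L)\le C(r_L/L)^d$, which vanishes since $r_L/L\to 0$; the bound is uniform in $t\ge L^2$.

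For part (a), my plan is a standard occupation-time/strong Markov argument. Let $\tau=\inf\{s>0:Z^{L,x}_s=0\}$. After $\tau$, the walk is at $0$ and, having a rate-one exponential holding time there, satisfies $E_0\bigl[\int_0^1 1_{\{Z^{L,0}_u=0\}}\,du\bigr]\ge c_0>0$. Combining this with the strong Markov property at $\tau$ gives
\[
\int_0^{t_L}P(Z^{L,x}_s=0)\,ds\ \ge\ c_0\,P(\tau\le t_L-1).
\]
Thus it suffices to show $\int_0^{t_L}p^L_s(x,0)\,ds\to 0$ uniformly in $x$ with $|x|\ge r_L$; replacing $t_L$ by $2t_L$ (still $\ll L^d$) then recovers the stated bound.

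To bound the integral, I split at $s=L^2$. For $s\in[L^2,t_L]$ Proposition~\ref{rw1}(a) gives $p^L_s(x,0)\le CL^{-d}$, so this piece is at most $Ct_L/L^d\to 0$ since $t_L\ll L^d$. For $s\in[0,L^2]$ I use the method of images $p^L_s(x,0)=\sum_{k\in\ZZ^d}p^{\ZZ}_s(x+kL,0)$. The $k=0$ term integrates to at most the $\ZZ^d$ Green's function $G^{\ZZ}(x)$, which by classical asymptotics (Spitzer, or the local CLT combined with the Gaussian upper bound \eqref{phitbnd}) satisfies $G^{\ZZ}(x)\le C|x|^{2-d}\le Cr_L^{2-d}$, vanishing since $d\ge 3$ and $r_L\to\infty$. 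For $k\ne 0$ I use the sub-Gaussian heat-kernel bound $p^{\ZZ}_s(0,y)\le Cs^{-d/2}e^{-c|y|^2/s}$, valid for our finite-range symmetric walk. Since $|x+kL|\ge |k|L/2$ for $|x|\le L/2$ and $|k|\ge 1$, the substitution $u=c|kL|^2/s$ gives
\[
\int_0^{L^2}p^{\ZZ}_s(x+kL,0)\,ds\ \le\ C|kL|^{2-d}\int_{c|k|^2}^{\infty}u^{d/2-2}e^{-u}\,du\ \le\ CL^{2-d}|k|^{2-d}e^{-c|k|^2/2},
\]
and the sum over $k\ne 0$ is $O(L^{2-d})\to 0$.

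The main obstacle is invoking the Gaussian heat-kernel bound on $\ZZ^d$, which is needed so that the sum over the image copies $x+kL$ converges; a naive bound via $G^{\ZZ}(x+kL)\le C|x+kL|^{2-d}$ alone fails because $\sum_{k\ne 0}|k|^{2-d}$ diverges for $d\ge 3$. The Gaussian factor $e^{-c|k|^2 L^2/s}$ available for $s\le L^2$ restores convergence and produces the sharp $L^{2-d}$ decay. These estimates are standard for finite-range symmetric random walks on $\ZZ^d$, but they are the technical backbone of the argument.
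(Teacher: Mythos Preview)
Your proof is correct. Part (b) is handled exactly as in the paper, and your occupation-time/strong Markov reduction for part (a) is the same idea the paper uses (the paper writes the inequality as $P(\tau\in[t'_L,t_L])\le e\int_{t'_L}^{t_L+1}P(Z^{L,x}_s=0)\,ds$, which is your bound with $c_0=e^{-1}$).

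The genuine difference is in how the short-time contribution is controlled. You integrate $p^L_s(x,0)$ all the way from $s=0$, which forces you to resolve the non-integrability of $s^{-d/2}$ near zero; you do this via the method of images plus a sub-Gaussian heat-kernel bound on $\ZZ^d$, obtaining the explicit rates $r_L^{2-d}$ and $L^{2-d}$. The paper avoids this entirely by inserting an intermediate scale $t'_L\to\infty$ with $t'_L\ll r_L\wedge t_L$: for $s\ge t'_L$ the crude bound $p^L_s(x,0)\le C(s^{-d/2}\vee L^{-d})$ from Proposition~\ref{rw1}(a) is already integrable, while $P(\tau\le t'_L)$ is disposed of by a first-moment maximal inequality, $P(\sup_{s\le t'_L}|Z^{L,0}_s|\ge r_L)\le 2dE|Z^0_{t'_L}|/r_L\to 0$. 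The paper's route is more elementary (no Gaussian estimates, no images), while yours yields sharper quantitative decay and would adapt more readily if one wanted explicit rates in $L$.
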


\noindent
(b) with $r_L = L^{3/4}$ gives (iii) of Lemma \ref{L2} and (ii) of Lemma \ref{L1}.
(a) gives (i) of Lemma \ref{L1} and (ii) of Lemma \ref{L2}.
To prove (i) of Lemma \ref{L2}, combine (iii) of Lemma \ref{L2} with (i) of Lemma \ref{L1}.

\begin{proof}
Let $\tau=\inf\{t\ge0: Z^{L,x}_t=0\}$, let $|x|\ge r_L$, and
choose $\{t'_L\}$ such that $t'_L\to\infty$ and 
$t'_L\ll t_L\wedge r_L$. Using a standard martingale argument,
\[
P(\tau\in[0,t'_L]) \le P(\sup_{0\le t\le t'_L}|Z^{L,0}_t|\ge
r_L)
\le 2d P(|Z^0_{t'_L}|\ge r_L) \le
2d E(|Z^0_{t'_L}|)/r_L,
\]
which tends to 0 because $t'_L\ll r_L$. (Alternatively,
could use the assumption $p$ has finite range and
\eqref{tLbnd} for some $t'_L\to\infty$, which is all that's
needed for below.) 

Next, by the Markov property and the fact that $Z^{L,x}_t$ is a rate
one walk, \eqref{tLbnd} implies
\[
P(\tau\in[t'_L,t_L]) \le e\int_{t'_L}^{t_L+1}P(Z^x_t=0)
\le Ce\int_{t'_L}^{L^2\wedge t'_L}t^{-d/2}dt + 
Ce\int_{t'_L\wedge L^2}^{t'_L+1}L^{-d}dt.
\]
This tends to 0 because $t'_L\to\infty$ and $t'_L/L^d\to 0$,
proving (a).

For (b), the bound \eqref{tLbnd} implies
$P(Z^{L,0}_{L^2}=0)\le CL^{-d}$, so we have
\[
P(|Z^{L,0}_{L^2}|\le r_L) \le C L^{-d}(|r_L|+1))^d \to 0
\]
which is the desired result.
\end{proof}

\clearp

\section{Proof of Theorem \ref{TFth}} \label{sec:TFpf}

For this result we need to augment the construction with Poisson processes $T_n^x$, $n \ge 1$
that have rate $\mu$, and random variables $V_n^x$ that are uniform over the strategy set.
At time $T_n^x$ the value at $x$ is set equal to $V_n^x$. Since mutations tell us the value at a site,
when we work backward in the dual, we kill a particle when it encounters a mutation.
When all of the particles have been killed then we can compute the value of the process at all the 
sites used to initialize the dual.

Suppose first that there are no mutations. Since $w$ satisfies the conditions for Regime 2, it follows from
Theorem \ref{odeconv} that if we run time at rate $1/w$ then in the limit as $L \to\infty$ the density of type $k$ satisfies
\beq
\frac{du_k}{dt} = \phi_k(u).
\label{limnomut}
\eeq
If we now return to the case with mutations and assume that $\mu/w \to c$ then the limiting equations become
\beq
\frac{du_k}{dt} = \phi_k(u) + \frac{\mu}{w}(1/n - u_k)
\label{limDE}
\eeq
so equilibria are solutions of
$$
u_k = \frac{1}{n} + \frac{w}{\mu} \phi_k(u).
$$
Doing some algebra gives
$$
u_k - \frac{1}{n} = \frac{w}{\mu} \phi_k(1/n,\ldots 1/n) + \frac{w}{\mu}(\phi_k(u) - \phi_k(1/n,\ldots 1/n))
$$
and hence 
$$
\left|u_k - \frac{1}{n} - \frac{w}{\mu} \phi_k(1/n,\ldots 1/n) \right| \le \frac{w}{\mu}\left|\phi_k(u) - \phi_k(1/n,\ldots 1/n)\right|.
$$

Using the fact that $\phi_k$ is Lipschitz continuous we conclude 
\begin{align*}
|u_k - \frac{1}{n}| \le C_1w/\mu, &\\
\left|u_k - \frac{1}{n} - \frac{w}{\mu} \phi_k(1/n,\ldots 1/n) \right|& \le C_2 (w/\mu)^2.
\end{align*}
If $\mu/w \to\infty$ slowly enough then we can use the last result to conclude
$$
u_k > 1/n ,
$$
when $w$ is small giving the desired formula.

\clearp

\end{document}